\theoremstyle{plain}
\newtheorem{thm}{Theorem}[section]
\newtheorem{thmm}{Theorem}
\newtheorem{cor}[thm]{Corollary}
\newtheorem{conj}{Conjecture}
\newtheorem{lem}[thm]{Lemma}
\newtheorem{prop}[thm]{Proposition}
\newcommand{\Aut}{\mbox{\rm Aut}}
\newcommand{\Adj}{\mbox{\rm Adj}}
\newcommand{\Tr}{\mbox{\rm Tr}}
\begin{document}

\title{Large connected strongly regular graphs are
  Hamiltonian}
\author{L. Pyber}

\maketitle

\begin{abstract}
  We prove that every connected strongly regular graph
  on sufficiently many vertices contains a Hamiltonian
  cycle. We prove this by showing that, apart from
  three families, connected strongly regular graphs are
  (highly) pseudo-random. Our results suggest a number of new questions and conjectures.

  We also show that dense graphs with primitive automorphism groups are
  pseudo-random unless their automorphism groups are
  almost abelian.
\end{abstract}

%%OK

\setcounter{section}{-1}
\section{Introduction}\label{sec:null}

A graph is \textit{strongly regular} with parameters
$(v,k,\lambda,\mu)$ if it is a $k$-regular graph on $v$
vertices such that

\begin{enumerate}
\item[(i)] each edge is in $\lambda$ triangles,

\item[(ii)] any two non-adjacent vertices have $\mu$
  common neighbours.
\end{enumerate}
(We exclude the complete and  empty graphs).

For example, the Petersen graph is strongly regular with
parameters (10,3,0,1).

It is conjectured by Brouwer and Haemers~\cite{BH2}
that the Petersen graph is the only connected
non-Hamiltonian strongly regular graph.

They proved the conjecture for graphs on at most 98
vertices and remarked that it can be shown to hold for
graphs with "most admissible parameter sets for
strongly regular graphs" by means of a sufficient
condition for hamiltonicity given in~\cite{BH2}.  It is
certainly not hard to show that the members of several
infinite families of strongly regular graphs are
Hamiltonian. These include complete multipartite graphs
of type $K_{m,m,\dots,m}$, line graphs of Steiner
systems and Latin square graphs (these families will be
discussed later).

We prove the following.

\begin{thmm}\label{th:01}
  There is a number $N$ such that if $G$ is a connected
  strongly regular graph on $v > N$ vertices, then $G$
  contains a Hamiltonian cycle.
\end{thmm}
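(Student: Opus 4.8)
The plan is to split connected strongly regular graphs into two classes according to their spectral gap. Recall that a connected strongly regular graph $G$ with parameters $(v,k,\lambda,\mu)$ has exactly three eigenvalues: $k$ (simple) and $r\ge 0>s$, with $rs=\mu-k$ and $r+s=\lambda-\mu$; write $\lambda_2(G)=\max(r,|s|)$ for the second largest eigenvalue in absolute value. From $r|s|=k-\mu\le k$ one gets $\min(r,|s|)\le k/\lambda_2(G)$, so if $\lambda_2(G)>k/f(v)$ for a slowly growing function $f$, then $\min(r,|s|)<f(v)$: either $G$ has least eigenvalue $\ge -f(v)$, or (passing to $\overline G$, which is strongly regular with least eigenvalue $-1-r$) $\overline G$ has least eigenvalue $>-1-f(v)$.

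First, I would invoke a quantitative form of the classification of strongly regular graphs with bounded smallest eigenvalue (Cameron--Goethals--Seidel--Shult for least eigenvalue $-2$, Neumaier in general): there is a function $P$, which can be taken polynomial, such that every connected strongly regular graph on more than $P(m)$ vertices with least eigenvalue $\ge -m$ is a complete multipartite graph $K_{m'\times t}$, a Latin square graph $LS_{m'}(n)$, or the line graph of a Steiner system $S(2,m',n)$, for some $m'\le m$. Taking $m=\lceil f(v)\rceil$ with $f(v)=v^{o(1)}$ chosen so that $v>P(f(v))$ for all large $v$, this yields the dichotomy: for $v$ large, either (a) $\lambda_2(G)\le k/f(v)$, or (b) one of $G,\overline G$ lies in one of these three families; and since $\overline{K_{m'\times t}}$ is disconnected while the complements of Latin square graphs and of Steiner line graphs are themselves highly pseudo-random (eigenvalue ratio $v^{\Omega(1)}$), case (b) forces $G$ itself to be a complete multipartite graph, a Latin square graph, or a line graph of a Steiner system, unless $G$ already satisfies (a). I expect this step -- obtaining the dichotomy with an \emph{effective}, essentially polynomial bound, so that it meshes with the only roughly logarithmic Hamiltonicity threshold used in the next step -- to be the main obstacle.

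Second, in case (a) the graph $G$ is a $(v,k,\lambda_2)$-pseudo-random graph with $\lambda_2\le k/f(v)$, and $k\to\infty$ as $v\to\infty$ (for each fixed $k$ there are only finitely many strongly regular graphs). Choosing $f$ to grow faster than the reciprocal of the known Hamiltonicity threshold for pseudo-random graphs, I would apply the theorem of Krivelevich and Sudakov, and its later refinements, that an $(n,d,\lambda)$-graph with $\lambda$ small enough relative to $d$ contains a Hamilton cycle; this delivers a Hamilton cycle in $G$. In the highly pseudo-random regime there is ample room, so even a crude version of such a result suffices, and if needed one can exploit the full strength of strong regularity rather than just the eigenvalue bound.

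Third, in case (b) each of the three families is Hamiltonian by the Chv\'atal--Erd\H{o}s criterion. By a theorem of Brouwer and Mesner the vertex connectivity of a connected strongly regular graph equals its valency $k$, while Hoffman's ratio bound gives independence number $\alpha(G)\le |s|v/(k+|s|)$. Checking the three parameter sets: for $K_{m\times t}$ with $t\ge 2$ one gets $\alpha(G)\le m\le (t-1)m=k$; for the Latin square graph $LS_m(n)$ one gets $\alpha(G)\le n\le m(n-1)=k$; for the line graph of $S(2,m,n)$ one gets $\alpha(G)\le n/m\le m(n-m)/(m-1)=k$ once $n$ is large. Hence $\kappa(G)=k\ge\alpha(G)$ and $G$ is Hamiltonian. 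Combining the three steps shows that every connected strongly regular graph on more than some absolute constant $N$ vertices has a Hamilton cycle.
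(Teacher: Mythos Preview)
Your plan is correct and follows the same three-part architecture as the paper: (i) separate out the three ``non-exceptional'' families (complete multipartite, Latin square, Steiner line graphs), (ii) show every remaining connected strongly regular graph has a large spectral ratio $k/\Lambda$, and (iii) feed that into the Krivelevich--Sudakov hamiltonicity criterion. The ingredients you name---Neumaier's theorem for step~(ii) and a direct argument for step~(i)---are exactly the ones the paper uses.

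The execution differs in two places worth noting. For step~(ii) you argue by dichotomy on $\min(r,|s|)$ and pass to $\overline{G}$ when $r$ is small, invoking Neumaier's classification on whichever of $G,\overline{G}$ has bounded least eigenvalue; you then observe that complements of Latin square or Steiner graphs with small $m$ are themselves pseudo-random. The paper avoids the complement detour entirely: it bounds $k/|s|$ for \emph{all} primitive strongly regular graphs via Seidel's absolute bound $v\le f(f+3)/2$ (getting $k/|s|>v^{1/6}/2$), and bounds $k/r$ only for exceptional graphs via Neumaier's quantitative $\mu$-bound $r\le s(s+1)(\mu+1)/2-1$ (getting $k/r>v^{1/10}$). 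This yields the clean uniform statement $k/\Lambda>v^{1/10}/2$ for every exceptional $G$, which is stronger than what your dichotomy delivers and sidesteps the case analysis on $\overline{G}$. The ``main obstacle'' you flag---making Neumaier effective with a polynomial $P$---is precisely what these two short lemmas accomplish.

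For step~(i) you give a uniform Chv\'atal--Erd\H{o}s argument via Brouwer--Mesner connectivity and the Hoffman ratio bound, which is a pleasant alternative to the paper's treatment (easy direct check for complete multipartite and Latin square graphs, and a citation to Hor\'ak--Rosa for Steiner line graphs). Your inequalities check out, with the Steiner case needing $n$ large as you note, which is harmless for the asymptotic statement.
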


Connected strongly regular graphs are exactly distance regular graphs of diameter 2 . Motivated by Theorem~\ref{th:01} and some additional evidence (discussed in Section 3) we make  the following (somewhat provocative) conjecture.

\begin{conj}\label{conj:02}
Connected distance regular graphs are Hamiltonian with finitely many exceptions.
\end{conj}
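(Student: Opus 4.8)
The plan is to split connected distance-regular graphs into a \emph{pseudo-random} regime, handled uniformly, and a \emph{sparse, highly structured} regime, handled family by family, exactly in the spirit of the proof of Theorem~\ref{th:01}. For the first regime the cleanest tool is the Chv\'atal--Erd\H os criterion: a graph is Hamiltonian once its vertex connectivity is at least its independence number. Two facts make this tractable for distance-regular graphs. First, by the theorem of Brouwer and Koolen the vertex connectivity of a connected distance-regular graph equals its valency $k$. Second, Hoffman's ratio bound gives $\alpha(G)\le v\,\frac{-\theta_{\min}}{k-\theta_{\min}}$, where $\theta_{\min}$ is the least eigenvalue. Combining these, $G$ is Hamiltonian whenever $(-\theta_{\min})(v-k)\le k^2$, a condition governed entirely by the spectral gap and forced by pseudo-randomness. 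So the first step is to verify that this inequality covers all but finitely many distance-regular graphs of each fixed diameter, and more generally every graph that is ``dense'' in the sense that $v=O\!\big(k^2/(-\theta_{\min})\big)$; this is where the pseudo-randomness machinery behind Theorem~\ref{th:01} re-enters, now packaged through the ratio bound rather than through the structure of diameter-$2$ graphs.

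The genuinely hard graphs are the sparse ones, where $v$ is large compared with $k$ --- cycles, hypercubes, Hamming and Johnson graphs, doubled Odd graphs, and their relatives. Here the ratio bound is useless because $\alpha(G)$ is comparable to $v$, and expansion-based arguments fail outright. My plan is to peel these off using the structure theory of distance-regular graphs: first bound $k$ in terms of the diameter $d$ via the feasibility conditions on the intersection array, so that outside the dense regime $k$ is confined to a bounded or slowly growing range, and then invoke the known classifications and characterizations (distance-regular graphs with classical parameters, of bounded valency, or of large diameter) to reduce to an explicit list of infinite families. For each such family one exhibits Hamiltonian cycles directly: Hamming and Johnson graphs and Cartesian products admit Gray-code and product constructions, cycles and near-polygonal graphs are Hamiltonian by inspection, and the remaining named families, many of which are distance-transitive and hence vertex-transitive, can be treated via their automorphism groups and existing Hamiltonicity results for vertex-transitive graphs.

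Assembling the two regimes yields Hamiltonicity with finitely many exceptions \emph{provided} the reduction in the sparse regime is exhaustive, and this is precisely where the main difficulty lies. Unlike the diameter-$2$ case of Theorem~\ref{th:01}, connected distance-regular graphs of unbounded diameter are not classified, and there is no complete structure theorem for the sparse ones. The worst case is large diameter with valency growing slowly but without bound in $d$: there neither the Chv\'atal--Erd\H os inequality nor any single family-characterization applies. A genuine proof would therefore require either a new structural dichotomy --- ``large spectral gap, or a bounded-complexity product/covering description'' --- or enough classification in the sparse regime to enumerate the relevant families. I expect the vertex-transitive subcase to be the more approachable half, largely reducible to existing technology, and the hypothetical non-vertex-transitive sparse distance-regular graphs to be the true obstruction, which is exactly why the statement is offered here as a conjecture rather than as a theorem.
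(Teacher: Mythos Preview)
This statement is a \emph{conjecture} in the paper, not a theorem; there is no proof in the paper to compare your proposal against. The paper offers only partial evidence: Corollary~\ref{co:32} (a spectral-gap bound for merged graphs of distance-regular graphs via Godsil's multiplicity theorem), the Bang--Dubickas--Koolen--Moulton finiteness result for fixed valency, and the weaker per-diameter Conjecture~3.1, which the author already calls ``very hard''. You correctly recognise in your final paragraph that the statement is open.

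Your proposal is therefore not a proof but a research programme, and you identify its gap yourself: the sparse regime with unbounded diameter and slowly growing valency is not covered by any classification, and neither the Chv\'atal--Erd\H{o}s/Hoffman route nor any existing structure theorem handles it. The paper explicitly flags the van~Dam--Koolen non-vertex-transitive family as a warning that one cannot simply fall back on vertex-transitivity. So the honest assessment is that your dense regime is plausible (and closer in spirit to the paper's own evidence, which also goes through eigenvalue multiplicities and spectral gaps, though the paper uses Theorem~\ref{th:03} rather than Chv\'atal--Erd\H{o}s), but your sparse regime is a wish list rather than an argument: ``invoke the known classifications'' presumes a completeness of classification that does not exist, and ``treat via automorphism groups and existing Hamiltonicity results for vertex-transitive graphs'' runs into the still-open Lov\'asz conjecture. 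These are not fixable gaps in a proof; they are the reasons the conjecture remains a conjecture.
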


If $G$ is a $k$-regular graph then the eigenvalues of
its adjacency matrix are real numbers $k=\lambda_1
\geq\lambda_2 \geq\dots \geq\lambda_v$. Set
$\Lambda=\Lambda(G)=\max \{|\lambda_i(G)| \mid i={2,3,\dots,
  v}\}$. The parameter $\Lambda$ is usually called the
second eigenvalue of $G$.

Our proof rests on the following hamiltonicity
criterion of Krivelevich and Sudakov~\cite{KS2}.

\begin{thmm}[Krivelevich, Sudakov]\label{th:03}
  Let $G$ be a $k$-regular graph on $v$ vertices. If
  $v$ is large enough and $k/\Lambda> 1000\log
  v(\log\log\log v)/(\log\log v)^2$, then $G$ is
  Hamiltonian.
\end{thmm}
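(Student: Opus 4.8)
The plan is to derive the Hamiltonicity of $G$ from P\'osa's rotation--extension technique, after converting the eigenvalue hypothesis into two purely combinatorial expansion statements. \emph{Step 1 (from the eigenvalue gap to expansion).} The basic tool is the expander mixing lemma: for a $k$-regular graph on $v$ vertices and any disjoint $A,B\subseteq V(G)$,
\[
\left|\, e(A,B)-\frac{k\,|A|\,|B|}{v}\,\right|\le\Lambda\sqrt{|A|\,|B|}.
\]
Note first that $\Lambda<k$ already forces $G$ to be connected, since a disconnected $k$-regular graph has $\Lambda=k$; so connectivity is automatic. From the mixing lemma I would extract: (a) \emph{vertex expansion of small sets}: every $S$ with $|S|\le v/8$ satisfies $|N(S)\setminus S|>3|S|$ --- obtained by taking $B=N(S)\setminus S$ and comparing the identity $e(S,B)=k|S|-2e(S)$ with the bounds the lemma gives for $e(S)$ and for $e(S,B)$; and (b) \emph{edges across large sets}: any two disjoint sets of size at least $\lceil v\Lambda/k\rceil$ span an edge of $G$. (Property (a) in fact holds with expansion factor of order $k^2/\Lambda^2$ once $|S|$ is genuinely small; only the factor~$3$ up to linear size is essential.) These deductions are elementary.

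\emph{Step 2 (rotations).} With (a) and (b) in hand I would run the argument of P\'osa and Koml\'os--Szemer\'edi. Let $P=x_0x_1\cdots x_\ell$ be a longest path in $G$; rotating with $x_0$ fixed, let $R$ be the set of attainable other endpoints. Maximality of $P$ puts every $G$-neighbour of every vertex of $R$ on $P$, and P\'osa's lemma gives $|N(R)|<3|R|$; combined with (a) this forces $|R|>v/8$, so $P$ already covers a positive fraction of $V(G)$. Bootstrapping the rotations --- now also moving the second endpoint, and iterating --- one builds a large family of attainable endpoint \emph{pairs}; property (b) then forces some such pair to be joined by an edge of $G$, and closing it produces a cycle on $V(P)$. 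If this cycle were not spanning, connectivity of $G$ would let us re-open it at a vertex with a neighbour outside the cycle and thereby obtain a path longer than $P$, a contradiction; hence the cycle is Hamiltonian.

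\emph{Main obstacle.} The hard part --- and the reason the hypothesis reads $k/\Lambda>1000\log v\,(\log\log\log v)/(\log\log v)^2$ rather than a clean $k/\Lambda>C$ --- is the quantitative accounting inside Step 2: one must guarantee \emph{enough} attainable endpoint pairs to invoke (b) while controlling the edges ``broken'' during the many rotations, and must choose the number of rotation rounds and the threshold in (b) essentially optimally. It is precisely this trade-off between the expansion rate in (a), the size threshold $v\Lambda/k$ in (b), and the efficiency of the rotation process that produces both the iterated-logarithm factors and the absolute constant. Step 1 and the raw mechanics of rotations are standard; the sharp bookkeeping that converts the spectral ratio $k/\Lambda$ into a Hamilton cycle is where the genuine difficulty lies.
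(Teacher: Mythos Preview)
The paper does not prove this statement at all: it is quoted verbatim as a result of Krivelevich and Sudakov and used as a black box. There is therefore no ``paper's own proof'' to compare your proposal against; the author simply cites the theorem and moves on to combine it with his eigenvalue-ratio bounds for strongly regular graphs.

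That said, your sketch is a fair outline of how the original Krivelevich--Sudakov argument actually proceeds: the expander mixing lemma is used to convert the spectral gap into (i) strong vertex expansion of small sets and (ii) guaranteed edges between moderately large sets, and then P\'osa rotation--extension turns these two properties into a Hamilton cycle. You are also right that the iterated-logarithm shape of the hypothesis arises from the quantitative bookkeeping in the rotation phase, not from Step~1. What you have written is a plan rather than a proof --- the delicate part you flag as the ``main obstacle'' is precisely where all the work lies, and you have not carried it out --- but for the purposes of this paper no proof was expected, only the citation.
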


Let us call a connected strongly regular graph
\textit{exceptional} if it is not a complete
multipartite graph or a Steiner graph or a Latin square
graph. It seems to be impossible to classify exceptional strongly regular graphs. The central result of this note is the
following.

\begin{thmm}\label{th:04}
  Let $G$ be an exceptional strongly regular graph.
  Then $k/\Lambda > v^{1/10}/2$.
\end{thmm}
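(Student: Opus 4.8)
\emph{Proof proposal.}

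The plan is to analyze the spectrum of $G$. A connected strongly regular graph has exactly three eigenvalues $k>r>s$, and writing $m=-s$ one has $k=\mu+mr$, $k-1-\lambda=(m-1)(r+1)$, $\Lambda=\max(r,m)$, together with the identity $\mu(v-1-k)=k(k-1-\lambda)$, which (using $\mu\ge 1$ and $\lambda\ge 0$) gives $v\le k^{2}+1$, that is, $k\ge\sqrt{v-1}$. If the eigenvalues are irrational then $G$ is a conference graph, for which $k/\Lambda=\sqrt v-1>v^{1/10}/2$; so I may assume $r$ and $m$ are integers. Since an exceptional graph is not complete multipartite it is primitive, hence $r\ge 1$ and $m\ge 2$; and it is not bipartite (the connected bipartite strongly regular graphs are the complete bipartite graphs, hence complete multipartite), so $\Lambda<k$ and $k/\Lambda>1$. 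In particular the bound holds whenever $v\le 2^{10}$, so it remains to treat $v>2^{10}$.

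From $k=\mu+mr$ one gets $k/\Lambda=\mu/\Lambda+\min(r,m)$, so the conclusion holds if $\min(r,m)\ge v^{1/10}/2$, or if $\mu\ge\Lambda v^{1/10}/2$; and from $k\ge\sqrt{v-1}$ it holds if $\Lambda\le v^{2/5}$. So I may assume $\min(r,m)<v^{1/10}/2$, $\mu<\Lambda v^{1/10}/2$ and $\Lambda>v^{2/5}$, and the goal becomes to show that for $v$ large this configuration cannot occur for an exceptional graph. The next step is to arrange, after complementing if necessary, that the smallest eigenvalue $-m$ is small. If $m=\min(r,m)<v^{1/10}/2$ this already holds. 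Otherwise $r=\min(r,m)<m$, and I pass to $\bar G$, a primitive strongly regular graph on $v$ vertices with smallest eigenvalue $-(r+1)$ and $r+1\le v^{1/10}$. Now $\bar G$ is not complete multipartite (else $G$ is disconnected), and not a Latin square graph (the complement of a Latin square graph again has the parameters of a Latin square graph, contradicting exceptionality of $G$). It is also not a Steiner graph: if $\bar G=L(S(2,K,n))$ with $K=r+1$, then reading off the parameters of a Steiner graph and of its complement gives $k_{\bar G}<4K\sqrt v\le 4v^{3/5}$, hence $\mu_G\ge v-2k_{\bar G}>v/2$ and $\Lambda(G)=m_G<4\sqrt v$, so $\mu_G/\Lambda(G)>\sqrt v/8>v^{1/10}/2$, contradicting $\mu<\Lambda v^{1/10}/2$. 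Thus $\bar G$ is again exceptional. Either way we have produced an exceptional strongly regular graph $H$ on $v>2^{10}$ vertices whose smallest eigenvalue $-m_H$ satisfies $m_H<v^{1/10}$.

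Finally I would invoke the structure theory of strongly regular graphs organized by their smallest eigenvalue (Neumaier): such a graph with smallest eigenvalue $-m$ is a Steiner graph, a Latin square graph, or has at most $P(m)$ vertices for a fixed polynomial $P$ — and one can take $\deg P<10$. Since $H$ is exceptional it is neither a Steiner nor a Latin square graph, so $v=|V(H)|\le P(m_H)\le P(v^{1/10})$, which is impossible once $v$ exceeds an absolute constant; after enlarging the threshold $2^{10}$ accordingly, the finitely many remaining parameter sets (all with $v$ and $m_H$ bounded) are checked directly. The hard part is precisely this last ingredient: one must make Neumaier's claw bound $\mu\le m^{3}(2m-3)$ — and the ensuing bounds on $k$, $r$ and $v$ for the non-geometric graphs, as well as for the point graphs of proper partial geometries with line size $m$ — explicit enough that $v$ is bounded by a polynomial in $m$ of degree strictly less than $10$; this is what calibrates the exponent $1/10$ in the statement. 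The complementation step and the finite check are comparatively routine.
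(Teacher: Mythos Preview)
Your proposal leaves the decisive step undone, and the route you have chosen makes that step harder than it needs to be.

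First a small correctable error: in the complementation branch you assert that $\bar G$ cannot be a Latin square graph because ``the complement of a Latin square graph again has the parameters of a Latin square graph, contradicting exceptionality of $G$''. Having Latin-square \emph{parameters} does not force $G$ itself to be a Latin square graph, so this does not contradict the hypothesis. (Your Steiner argument, by contrast, works directly with the parameters of $G$; the same computation handles the Latin-square case too.)

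The real gap is the final ingredient. You need that an exceptional strongly regular graph with smallest eigenvalue $-m$ has $v\le P(m)$ with $\deg P<10$, and you explicitly leave this open. If one feeds the claw bound $\mu\le m^{3}(2m-3)$ into Neumaier's inequality $r\le m(m-1)(\mu+1)/2$, one gets $r=O(m^{6})$, $k=rm+\mu=O(m^{7})$, and only $v\le k^{2}+1=O(m^{14})$; getting the degree below $10$, and dealing uniformly with the geometric (partial-geometry) case, is not routine. So as written the argument does not close.

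The paper avoids both problems by never complementing and never asking for a polynomial bound $v\le P(m)$. It treats the two nontrivial eigenvalues separately. The key extra tool you are missing is Seidel's absolute bound $v\le f(f+3)/2$ (and the analogue for $g$), which immediately yields $\Lambda^{4}<vk^{2}$, i.e.\ $k/\Lambda>\sqrt{k/\sqrt v}$, for every \emph{primitive} strongly regular graph. With this in hand, $k/|s|>v^{1/6}/2$ follows by an elementary three-case split using only primitivity (so no appeal to exceptionality, Neumaier, or complementation). For the other eigenvalue one proves $k/r>v^{1/10}$ by combining Neumaier's inequality $r\le m(m-1)(\mu+1)/2-1$ with $\mu<k^{2}/(v-k)$ and a short case analysis on whether $k>v^{7/10}$, whether $m>v^{1/10}$, and whether $m\ge\mu$; the last case is disposed of by the two inequalities
\[
k\le\frac{m^{3}+2}{2}\,\mu,\qquad \mu<\frac{k^{2}}{v-k},
\]
which together contradict $m<v^{1/10}$ and $k\le v^{7/10}$ once $v>2^{10}$. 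This calibrates the exponent $1/10$ without ever needing a global bound on $v$ in terms of $m$, and it does not require checking that any complementary graph is exceptional.
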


By the above discussion Theorem~\ref{th:01} follows.
One can give an astronomical but explicit estimate for
the value of $N$ in Theorem~\ref{th:01} by glancing at
the proof of Theorem~\ref{th:03}.  To prove that
connected strongly regular graphs on say, at least
$1000$ vertices are Hamiltonian seems to be a
challenging task.
%%% ide jon valamî?

Theorem~\ref{th:04} shows that exceptional strongly
regular graphs are highly pseudo-random. Pseudo-random graphs,
that is graphs that have edge distribution similar to
that of a truly random graph were first systematically
investigated by Thomason~\cite{Th1}, \cite{Th2}.
Thomason already noted that many families of strongly
regular graphs have certain pseudo-random properties.This observation is made more explicit in the recent survey ~\cite{KS1}.

As explained e.g. in~\cite{KS1}   if
$G$ is a $k$-regular graph and $k/\Lambda$ is large then
$G$ has almost uniform edge distribution i.e. it is
pseudo-random (we do not give a formal definition, for
various possibilities see~\cite{AS}, ~\cite{KS1}.) In~\cite{BL} it
is shown that on the other hand if a $k$-regular graph
has almost uniform edge-distribution then $k/\Lambda$
is large.

We remark that the eigenvalues of the non-exceptional strongly regular
graphs are well-known. Hence Theorem~\ref{th:04}
largely clarifies the connection between
pseudo-randomness and strongly regular graphs discussed
e.g.\ in~\cite{Th2}, \cite{KS1} and~\cite{Ni}.

Using related ideas we exhibit other large, natural
classes of pseudo-random graphs. For example we prove
the following.

\begin{thmm}\label{th:05}
  Let $G_n$ be sequence of $k_n$-regular graphs of
  order $v_n$ with $k_n > \epsilon v_n$ (for some
  $\epsilon > 0$). Assume that the groups
  $X_n=\Aut(G_n)$ are primitive permutation groups and
  that the index of the largest abelian normal subgroup
  of $X_n$ goes to infinity.  Then
  $\Lambda(G_n)=o(k_n$), hence $G_n$ is pseudo-random.
\end{thmm}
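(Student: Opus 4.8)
The plan is to bound $\Lambda(G_n)$ from above by a representation-theoretic argument that reduces the theorem to a statement about the permutation module of a primitive group, and then to prove that statement by examining the possibilities for the socle of $X_n$, using the elementary description of the socle of a primitive permutation group together with Jordan's theorem on finite linear groups. For the spectral reduction, let $A_n$ be the adjacency matrix of $G_n$, regarded as an operator on the permutation module $M_n=\mathbb{C}^{V(G_n)}$ of $X_n=\Aut(G_n)$. Each element of $X_n$ commutes with $A_n$, so every eigenspace of $A_n$ is an $X_n$-submodule of $M_n$. Since $X_n$ is transitive, $M_n=\mathbf{1}\oplus W_n$ with $W_n$ having no trivial constituent; and since $k_n>0$ and $X_n$ is primitive, $G_n$ is connected (a disconnected vertex-transitive graph of positive degree has its components as nontrivial blocks), so the all-ones vector spans the simple $k_n$-eigenspace, which is the summand $\mathbf{1}$. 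Hence for $i\ge2$ the $\lambda_i$-eigenspace is a nonzero submodule of $W_n$, so its dimension is at least $d_n:=\min\{\dim U:0\neq U\le W_n\text{ an }X_n\text{-submodule}\}$. Combined with $\sum_{i\ge2}\lambda_i(G_n)^2=\Tr(A_n^2)-k_n^2=v_nk_n-k_n^2\le v_nk_n$ this gives $d_n\Lambda(G_n)^2\le v_nk_n$, whence $\Lambda(G_n)\le k_n/\sqrt{\epsilon d_n}$ because $k_n>\epsilon v_n$. So it is enough to prove $d_n\to\infty$, i.e.\ that the least degree of a nontrivial irreducible constituent of $M_n$ tends to infinity. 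Note also that $v_n\to\infty$, since otherwise only finitely many graphs occur and the index of the largest abelian normal subgroup of $X_n$ is bounded.

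\emph{Affine case: $N_n:=\mathrm{soc}(X_n)$ is a regular elementary abelian group.} Then $X_n=N_n\rtimes H_n$ with $H_n\le\mathrm{GL}(N_n)$ irreducible and $C_{X_n}(N_n)=N_n$, so the only abelian normal subgroups of $X_n$ are $1$ and $N_n$, and the hypothesis forces $|H_n|=[X_n:N_n]\to\infty$. Decomposing $M_n=\mathbb{C}[N_n]$ into $N_n$-character lines and grouping these into $H_n$-orbits, one sees that the nontrivial constituents of $M_n$ are exactly the submodules $M_O=\bigoplus_{\psi\in O}\mathbb{C}_\psi$ indexed by the nontrivial $H_n$-orbits $O$ on the character group of $N_n$, each $M_O$ being irreducible of degree $|O|$. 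Since $H_n$ acts irreducibly on that character group, the orbit of any nontrivial character spans it, so the kernel of the (faithful) $H_n$-action on $O$ fixes a spanning set and is therefore trivial; hence $|H_n|\le|O|!$, and so $d_n=\min_O|O|\to\infty$.

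\emph{Nonabelian-socle case: $N_n:=\mathrm{soc}(X_n)=T_n^{\,m_n}$ with $T_n$ nonabelian simple.} Here $X_n$ permutes the $m_n$ simple factors with at most two orbits, each of which it permutes transitively, and $C_{X_n}(N_n)=1$; so the largest abelian normal subgroup of $X_n$ is trivial and the hypothesis is automatic, while $\max(m_n,|T_n|)\to\infty$ because $v_n\le|X_n|\le|\Aut(T_n)|^{m_n}m_n!$. Let $U$ be any nontrivial constituent of $M_n$. As $N_n$ is transitive, the $N_n$-fixed subspace of $M_n$ is $\mathbf{1}$, so $U|_{N_n}$ is nontrivial, and by Clifford's theorem $\dim U\ge t\cdot\chi(1)$, where $\chi=\chi^{(1)}\otimes\cdots\otimes\chi^{(m_n)}$ is a nontrivial $N_n$-irreducible occurring in $U|_{N_n}$ and $t$ is the size of its $X_n$-orbit. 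Writing $s\ge1$ for the number of indices $i$ with $\chi^{(i)}\neq\mathbf{1}$, we have $\chi(1)\ge2^s$ and $\chi(1)\ge e(T_n)$, where $e(T)$ denotes the least degree of a nontrivial (hence faithful) irreducible of a nonabelian simple group $T$. A covering estimate, using that $X_n$ is transitive on each of its at most two orbits of factors, gives $t\ge m_n/(2s)$, so $\dim U\ge(m_n/2s)\,2^s\ge m_n$; combined with $\dim U\ge e(T_n)$ this shows $d_n\ge\max(m_n,e(T_n))$, which tends to infinity because $\max(m_n,|T_n|)\to\infty$ and, by Jordan's theorem, $e(T)\to\infty$ as $|T|\to\infty$ over finite simple groups. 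In both cases $d_n\to\infty$, so $\Lambda(G_n)\le k_n/\sqrt{\epsilon d_n}=o(k_n)$ and $k_n/\Lambda(G_n)\to\infty$; as recalled in the Introduction, $G_n$ is then pseudo-random.

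I expect the nonabelian-socle case to be the main obstacle: the socle alone may already contribute a bounded-degree nontrivial constituent to the permutation module --- for example one of degree $e(T)^2$ in the diagonal-type configuration --- so one cannot replace $X_n$ by $N_n$; what saves the argument is that primitivity forces $X_n$ to mix the $m_n$ simple factors, and the covering bound $t\ge m_n/(2s)$ is precisely what turns ``$X_n$ transitive on the factors'' into a lower bound on $\dim U$ that grows with $m_n$. A secondary nuisance is the bookkeeping for the few configurations in which $N_n$ is a product of two minimal normal subgroups, where the action on the factors has two orbits rather than one; this costs only the harmless factor $2$.
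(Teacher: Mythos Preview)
Your spectral reduction---eigenspaces are $X_n$-submodules, so every nontrivial eigenvalue has multiplicity at least $d_n$, whence $d_n\Lambda(G_n)^2\le v_nk_n$ and $\Lambda(G_n)\le k_n/\sqrt{\epsilon d_n}$---is exactly the paper's Proposition~\ref{pr:3.4}. The divergence is in how you show $d_n\to\infty$.

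The paper does this in one stroke: for a primitive permutation group, every nontrivial irreducible constituent of the permutation module is \emph{faithful} (a standard fact, cited from Cameron). Hence $X_n$ embeds in $GL_{d_n}(\mathbb{C})$, and Jordan's theorem gives an abelian normal subgroup $S_n$ with $|X_n/S_n|<c^{d_n^2}$, so $d_n>c_0\sqrt{\log|X_n/S_n|}\to\infty$. This yields the explicit quantitative form $\Lambda(G)<c_1\bigl(vk/\sqrt{\log|A/S|}\bigr)^{1/2}$ (Corollary~\ref{co:3.5}), from which Theorem~\ref{th:05} is immediate.

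Your route replaces this by an O'Nan--Scott style case split on the socle, handling the affine and nonabelian-socle cases separately via Clifford theory and a covering argument. The argument is correct (including the bookkeeping for the two-minimal-normal configuration and the inequality $(m_n/2s)\cdot 2^s\ge m_n$), but it is considerably longer and reproves by hand what the faithfulness-plus-Jordan argument gives uniformly. One mild compensation: in the affine case your bound $|H_n|\le d_n!$ yields $d_n\gtrsim \log|X_n/S_n|/\log\log|X_n/S_n|$, which is a bit sharper than the paper's $\sqrt{\log|X_n/S_n|}$; but for the qualitative conclusion $\Lambda(G_n)=o(k_n)$ this gains nothing. The paper's proof is the cleaner one to present.
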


{Acknowledgement

I would like to thank Bal\'{a}zs Szegedy for several useful conversations in the ,,M\'{u}zeum''.}

\setcounter{section}{0}
\section{General bounds}\label{sec:one}

We call a strongly regular graph~$G$
\textit{primitive\/} if both $G$ and its complement are
connected (i.e.\ if~$G$ is not a complete multipartite
graph of type $K_{m,m,\dots,m}$ or the complement of
such a graph).  In this section we will derive bounds
for the eigenvalue ratios of primitive strongly regular
graphs using well-known results.  Apart from the
eigenvalue $k$ a connected strongly regular graph $G$
has two distinct eigenvalues which satisfy $r >0$ and
$-1 > s$ (hence $\Lambda =\max\{r,-s\}$). Denote by $f$
and $g$ their respective multiplicities.

We need the following basic restrictions on the
parameters.

\begin{prop}\label{pr:11}
Let $G$ be a strongly regular graph. Then we have

\begin{itemize}
\item[(i)]   $k^2+fr^2+gs^2=kv$,
\item[(ii)]  $k(k-1-\lambda)=\mu(v-k-1)$,
\item[(iii)] $rs=\mu-k$,
\item[(iv)]  $r+s=\lambda-\mu$,
\item[(v)]   $r$ and $s$ are integers, except perhaps
  when $G$ is a conference graph, that is,
  $(v,k,\lambda,\mu)=(4t+1,2t,t-1,t)$ for some integer
  $t$, $r=(\sqrt{4t+1}{-1})/2$ and
  $s=(-\sqrt{4t+1}-1)/2$.
\end{itemize}
\end{prop}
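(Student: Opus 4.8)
The plan is to extract all five statements from a single matrix identity. Write $A$ for the adjacency matrix of $G$ and $J$ for the all-ones matrix; the defining conditions (i)--(ii) say precisely that
\[
A^2 = kI + \lambda A + \mu(J - I - A) = \mu J + (\lambda-\mu)A + (k-\mu)I,
\]
since $(A^2)_{xy}$ counts common neighbours of $x$ and $y$, which is $k$, $\lambda$, or $\mu$ according as $x=y$, $x\sim y$, or $x\not\sim y$. I would also fix the set-up once and for all: we may assume $G$ is primitive, so that it is connected, $k$ is a simple eigenvalue of $A$ by Perron--Frobenius, and $A$ has exactly three distinct eigenvalues $k>r>s$ of multiplicities $1,f,g$.

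Parts (ii)--(iv) then follow by feeding eigenvectors into this identity. Applied to the all-ones vector $\mathbf 1$ (with $A\mathbf 1=k\mathbf 1$ and $J\mathbf 1=v\mathbf 1$) it gives $k^2=\mu v+(\lambda-\mu)k+(k-\mu)$, and rearranging produces $k(k-1-\lambda)=\mu(v-k-1)$, which is (ii). Applied to any eigenvector orthogonal to $\mathbf 1$, on which $J$ acts as $0$, it shows that $r$ and $s$ are the two roots of the monic integer quadratic $\theta^2-(\lambda-\mu)\theta-(k-\mu)=0$; comparing coefficients gives $r+s=\lambda-\mu$, which is (iv), and $rs=\mu-k$, which is (iii). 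Along the way one notes that the discriminant is positive and that the conventions $r>0$ and $s<-1$ indeed hold for primitive $G$.

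For (i) I would use traces. Since $G$ is loopless, $\Tr A=0$, hence $k+fr+gs=0$; and since $(A^2)_{xx}=k$ for every vertex, $\Tr A^2=vk$, hence $k^2+fr^2+gs^2=vk$, which is (i). Combining $k+fr+gs=0$ with $1+f+g=v$ and solving for the multiplicities gives, after a line of algebra, $f-\tfrac{v-1}{2}=\big(-k-\tfrac12(v-1)(\lambda-\mu)\big)/(r-s)$ (and similarly for $g$), where $r-s=\sqrt{(\lambda-\mu)^2+4(k-\mu)}$ and the radicand is a nonnegative integer.

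This last formula drives (v), which is the only part that takes real work. If the radicand is a perfect square, then $r-s$ is an integer, and since $r+s=\lambda-\mu$ is too, the numbers $r,s$ are rational; being roots of a monic integer polynomial they are algebraic integers, hence integers. If the radicand is not a perfect square, then $r-s$ is irrational, so rationality of $f$ forces the rational numerator above to vanish: $2k+(v-1)(\lambda-\mu)=0$. Then $f=g=(v-1)/2$ (so $v$ is odd) and $k=\tfrac12(v-1)(\mu-\lambda)>0$, whence $\mu>\lambda$; if moreover $\mu-\lambda\ge 2$ this would give $k\ge v-1$ and hence $G$ complete, which is excluded, so $\mu-\lambda=1$ and $v=2k+1$. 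Substituting into (ii) gives $\lambda=(k-2)/2$ and $\mu=k/2$; putting $k=2t$ yields exactly $(v,k,\lambda,\mu)=(4t+1,2t,t-1,t)$, and the quadratic $\theta^2+\theta-t=0$ then gives the stated $r$ and $s$. I expect this final descent --- from ``the discriminant is a non-square'' down to the conference parameters, which is the one place the non-completeness hypothesis is needed --- to be the main obstacle; parts (i)--(iv) are essentially bookkeeping with the matrix identity.
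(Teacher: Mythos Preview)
Your argument is correct and is exactly the standard derivation. The paper itself only proves (i), and by the same trace computation you give ($\Tr A^2=kv$); for (ii)--(v) it simply cites Brouwer--Haemers, so your write-up in fact supplies the details the paper omits rather than diverging from it.
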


The first equation follows from $k^2+fr^2+gs^2 =
\Tr(A^2)=kv$, where $A$ denotes the adjacency matrix of
$G$. For the rest see e.g.~\cite{BH1}.

Seidel's absolute bound~\cite{Se} will provide us with
a crucial starting point.

\begin{thm}[Seidel]\label{th:12}
  Let $G$ be a primitive strongly regular graph. Then
  we have

\begin{itemize}
 \item[(i)]  $v\leq f(f+3)/2$,
 \item[(ii)] $v\leq g(g+3)/2$.
 \end{itemize}
\end{thm}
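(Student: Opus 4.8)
The plan is to exploit the fact that the two eigenspaces of a primitive strongly regular graph $G$ carry natural algebraic structure coming from the adjacency algebra, and to combine this with a dimension count. Let $A$ be the adjacency matrix and let $E_r$ and $E_s$ be the orthogonal projections onto the eigenspaces of the eigenvalues $r$ and $s$, of ranks $f$ and $g$ respectively. These are symmetric idempotents lying in the (commutative, three-dimensional) Bose--Mesner algebra generated by $I$, $A$ and $J$. The key point is that this algebra is closed not only under ordinary matrix multiplication but also under entrywise (Hadamard) multiplication $\circ$, since $I\circ I=I$, $A\circ A=A$, and $J$ is the identity for $\circ$. Hence $E_r\circ E_r$ again lies in the algebra, and in fact can be written as a linear combination $\alpha I+\beta A+\gamma J$; one checks using the explicit spectral expansions of $I,A,J$ in the basis $\{I,E_r,E_s\}$ that $E_r\circ E_r$ is a nonzero symmetric matrix lying in the span of $E_r$ and $E_s$ together with $I$ --- more precisely, primitivity (so that neither $r$ nor $s$ equals $k$ and the multiplicities behave generically) guarantees that $E_r\circ E_r$ is not a scalar multiple of a single projection.

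The second ingredient is the standard fact that, for a symmetric $v\times v$ matrix $M$, the Hadamard squares $\{E\circ E\}$ as $E$ ranges over rank-$f$ symmetric idempotents --- equivalently, the symmetric matrices of the form $B^{\mathsf T}B$ where $B$ is $f\times v$ --- span a space of dimension at most $\binom{f+1}{2}=f(f+1)/2$. Concretely, if $u_1,\dots,u_v$ are the columns of such a $B$ (vectors in $\mathbb R^f$), then $E\circ E$ has $(i,j)$ entry $\langle u_i,u_j\rangle^2$, and these are all pulled back from the $\binom{f+1}{2}$-dimensional space $\mathrm{Sym}^2(\mathbb R^f)$ via the symmetric tensor square map $u\mapsto u\otimes u$. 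So the matrices $I$ (or $J$) together with $E_r$ and $E_s$ and $E_r\circ E_r$ must be linearly dependent once we have too many of them: tracking the exact linear-algebra bookkeeping, the symmetric matrices arising this way from the $f$-dimensional eigenspace span a space of dimension exactly $\binom{f+1}{2}$, and this space must contain the three linearly independent matrices $J$, $A$, $E_r$ coming from the rank-$f$ eigenspace construction (here we use that $v$ itself is the dimension of the ambient space and $v$ linearly independent rank-one pieces are needed). Counting gives $v\le\binom{f+1}{2}+\binom{f}{1}=f(f+1)/2+f=f(f+3)/2$, which is (i). Part (ii) follows by applying the same argument to the eigenvalue $s$, or equivalently to the complement of $G$ (which is again primitive strongly regular, with the roles of the multiplicities interchanged).

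I expect the main obstacle to be the precise linear-algebra accounting in the middle step: one must argue carefully that the relevant Hadamard squares, together with the small collection $\{J, A, E_r\}$, really do span a subspace of the $\binom{f+1}{2}+f$-dimensional space of symmetric-plus-correction matrices \emph{and} that enough of them are genuinely linearly independent to force $v\le f(f+3)/2$, and it is exactly here that the hypothesis of primitivity is used (to rule out the degenerate complete-multipartite case where the eigenspace structure collapses). Once the dimension count is set up correctly the inequality drops out, and since the statement is classical I would in practice simply cite \cite{Se} (or the treatment in \cite{BH1}) rather than reproduce the bookkeeping in full; but the sketch above is the route I would take if a self-contained argument were wanted.
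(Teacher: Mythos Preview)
The paper does not prove this theorem at all: it is quoted as Seidel's absolute bound and simply cited to \cite{Se}, so there is no in-paper argument to compare your proposal against. Your instinct at the end --- that in practice one just cites \cite{Se} or \cite{BH1} --- is exactly what the author does.

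As for the sketch itself: the ingredients you name (the Bose--Mesner algebra, closure under the Hadamard product, writing $E_r=U^{\mathsf T}U$ with columns $u_1,\dots,u_v\in\mathbb R^f$) are the right ones, but the dimension count in the middle is not yet a proof. The sentences about ``this space must contain the three linearly independent matrices $J,A,E_r$'' and ``$v$ linearly independent rank-one pieces are needed'' do not add up to the inequality $v\le f(f+3)/2$. The clean way to finish is to observe that the $u_i$ form a spherical $2$-distance set in $\mathbb R^f$ (the diagonal of $E_r$ is constant, and the off-diagonal entries take only two values according to adjacency), and then to invoke the classical bound for such sets: for each $i$ the polynomial $F_i(x)=(\langle x,u_i\rangle-\alpha)(\langle x,u_i\rangle-\beta)$ lies in the space of polynomials of degree $\le 2$ restricted to the sphere, which has dimension $\binom{f+2}{2}-1=f(f+3)/2$, and primitivity is what guarantees the $F_i$ are linearly independent. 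That is the missing ``precise linear-algebra accounting'' you flag; without it the argument does not close.
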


\begin{lem}\label{lem:13}
 Let G be a primitive strongly regular graph. Then we have \\
 $k/\Lambda >\sqrt{k/\sqrt v}$
\end{lem}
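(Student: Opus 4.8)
The plan is to show that the asserted inequality is equivalent to the eigenvalue bound $\Lambda^2 < k\sqrt{v}$, and then to prove that bound by pitting the trace identity of Proposition~\ref{pr:11}(i) against Seidel's absolute bound, Theorem~\ref{th:12}.

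First I would rewrite the claim in a more tractable form. Since $k>0$, $\Lambda>0$ (a primitive graph has an eigenvalue $s<-1$, so $\Lambda\ge|s|>1$) and $v>0$, the inequality $k/\Lambda>\sqrt{k/\sqrt v}$ holds if and only if $k^2/\Lambda^2 > k/\sqrt v$, i.e. if and only if $\Lambda^2 < k\sqrt v$. So it suffices to bound $\Lambda^2=\max\{r^2,s^2\}$ from above by $k\sqrt v$.

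Next, Proposition~\ref{pr:11}(i) gives $fr^2+gs^2 = kv-k^2 < kv$, using $0<k<v$. As both summands are nonnegative, this yields $r^2 < kv/f$ and $s^2 < kv/g$, hence $\Lambda^2 < kv/\min\{f,g\}$. Thus it is enough to check that $\min\{f,g\}\ge\sqrt v$: then $\Lambda^2 < kv/\sqrt v = k\sqrt v$, as wanted. To get $\min\{f,g\}\ge\sqrt v$, set $h=\min\{f,g\}$; Theorem~\ref{th:12} gives $h(h+3)/2\ge v$, so if $h^2<v$ then $2v\le h^2+3h< v+3h$, forcing $v<3h$ and hence $h^2<v<3h$, so $h\le 2$. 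But $h=1$ forces $v\le 2$ and $h=2$ forces $v\le 5$, and the only primitive strongly regular graph on at most $5$ vertices is the pentagon $C_5$; for $C_5$ one has $\Lambda=(1+\sqrt5)/2$, so $\Lambda^2=(3+\sqrt5)/2 < 2\sqrt5 = k\sqrt v$, and the conclusion holds directly. In all other cases $\min\{f,g\}\ge\sqrt v$ and the displayed estimate finishes the proof.

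The computations are entirely elementary, and I do not expect a genuine obstacle: the only content is the idea of combining the second-moment identity $fr^2+gs^2=kv-k^2$ with Seidel's lower bound on the multiplicities. The single delicate point is that Seidel's bound just barely fails to give $\min\{f,g\}\ge\sqrt v$ in one borderline instance ($C_5$), which must be handled by an explicit check; once that case is isolated, everything else is routine.
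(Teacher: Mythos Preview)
Your proof is correct and follows essentially the same approach as the paper: combine the trace identity $fr^2+gs^2=k(v-k)$ from Proposition~\ref{pr:11}(i) with Seidel's absolute bound to control $\Lambda$. The only difference is that the paper keeps the factor $v-k$ rather than bounding it immediately by $v$, obtaining $r^4\le k^2(v-k)^2/f^2< vk^2\cdot(v-k)/f^2\le vk^2$ (using $f^2\ge v-1\ge v-k$), which sidesteps the separate check of $C_5$.
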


\begin{proof}
  Our statement is equivalent to $\Lambda^4 < vk^2$.
  We prove $r^4 < vk^2$ (a similar argument shows that
  $s^4 < vk^2$). Theorem~\ref{th:12} implies that $v-1
  \leq f^2$. Proposition~\ref{pr:11}~(i) implies that
  $fr^2 \leq k(v-k)$. Hence $r^4 < vk^2 (v-k)/f^2 \leq
  vk^2$ as required.
\end{proof}

Lemma~\ref{lem:13} combined with Theorem~\ref{th:02}
already shows that large enough connected strongly
regular graphs with $k > \sqrt v(1000\log v)^2$ are
Hamiltonian. Note that we have $k>\sqrt{v-1}$ for any
connected strongly regular graph.

Another consequence of Lemma~\ref{lem:13} is the
following.

\begin{cor}\label{co:14}
Let $G$ be a primitive strongly regular graph. Then we have \\
$|\lambda-\mu| < v^{3/4}$.
\end{cor}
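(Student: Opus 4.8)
The plan is to combine Proposition~\ref{pr:11}~(iv) with Lemma~\ref{lem:13}. Since $r+s=\lambda-\mu$ and both $r>0$ and $s<-1$, we have $|\lambda-\mu| = |r+s| \le \max\{r,-s\} + \text{(the smaller of the two)}$, so in particular $|\lambda - \mu| < 2\Lambda$; actually it is cleaner to simply note $|\lambda-\mu| \le r + (-s)$ is not quite what we want, so instead I would bound $|\lambda - \mu| \le 2\max\{r,-s\} = 2\Lambda$. Thus it suffices to show $\Lambda < v^{3/4}/2$, or a slightly weaker bound that still yields $|\lambda-\mu| < v^{3/4}$.

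Next I would reuse the eigenvalue estimate already extracted inside the proof of Lemma~\ref{lem:13}: from Theorem~\ref{th:12} we get $v - 1 \le f^2$ and from Proposition~\ref{pr:11}~(i) we get $fr^2 \le k(v-k)$, hence $r^2 \le k(v-k)/f \le k(v-k)/\sqrt{v-1}$. A parallel argument with $g$ in place of $f$ gives $s^2 \le k(v-k)/\sqrt{v-1}$. So $\Lambda^2 \le k(v-k)/\sqrt{v-1}$. Now using the crude bound $k(v-k) \le v^2/4$ we obtain $\Lambda^2 \le v^2/(4\sqrt{v-1})$, hence $\Lambda \le v/(2(v-1)^{1/4})$, and therefore $|\lambda-\mu| \le 2\Lambda \le v/(v-1)^{1/4} < v^{3/4}$ once $v$ is, say, at least a small explicit constant; for the finitely many tiny primitive strongly regular graphs one can check the inequality $|\lambda - \mu| < v^{3/4}$ directly, or sharpen the estimate slightly.

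I do not anticipate a genuine obstacle here: the corollary is an immediate packaging of Lemma~\ref{lem:13} and Proposition~\ref{pr:11}~(iv), and essentially everything needed is already in the proof of Lemma~\ref{lem:13}. The only mild subtlety is making sure the constant $2$ coming from $|\lambda-\mu| \le 2\Lambda$ and the constant $2$ in the denominator combine to beat the exponent $3/4$ rather than merely $1$; this works because $\Lambda^2 \le k(v-k)/\sqrt{v-1} = O(v^{3/2})$ gives $\Lambda = O(v^{3/4})$, and the explicit constants are comfortably small. So the proof is: invoke $|\lambda-\mu| = |r+s| \le 2\Lambda$, then $\Lambda^2 \le k(v-k)/\sqrt{v-1} \le v^{3/2}/4$ from Seidel's bound and Proposition~\ref{pr:11}~(i), and conclude $|\lambda-\mu| \le v^{3/4}$, checking small cases by hand.
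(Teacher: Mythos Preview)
There is a genuine gap. Your bound $|\lambda-\mu|\le 2\Lambda$ is too weak by exactly the factor that kills the argument: with $\Lambda \le v/(2(v-1)^{1/4})$ you get $|\lambda-\mu|\le v/(v-1)^{1/4}$, and this quantity is \emph{strictly larger} than $v^{3/4}$ for every $v>1$ (since $(v-1)^{1/4}<v^{1/4}$). So your claim that the inequality holds ``once $v$ is at least a small explicit constant'' is false --- it never holds, and no finite check of small cases will rescue it.

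The missing observation is that $r$ and $s$ have opposite signs. Writing $m=-s>0$, Proposition~\ref{pr:11}~(iv) gives $\lambda-\mu=r+s=r-m$, a difference of two positive reals, so $|\lambda-\mu|=|r-m|\le \max\{r,m\}=\Lambda$ with no factor of $2$. Now Lemma~\ref{lem:13} says $k/\Lambda>\sqrt{k/\sqrt{v}}$, i.e.\ $\Lambda< v^{1/4}\sqrt{k}$, and since $k<v$ we get $\Lambda< v^{3/4}$. This is exactly the paper's one-line proof:
\[
|\lambda-\mu|\le \Lambda < v^{1/4}\sqrt{k}\le v^{3/4}.
\]
Your detour back into the internals of Lemma~\ref{lem:13} and the AM--GM bound $k(v-k)\le v^2/4$ are unnecessary (and, as seen, slightly lossy); the stated lemma already gives what you need once you drop the spurious factor of $2$.
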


\begin{proof}
  Using Proposition~\ref{pr:11}~(iv) we obtain that
  $|\lambda-\mu|\leq \Lambda < v^{1/4} \sqrt{k}\leq
  v^{3/4}$ as required.
\end{proof}

Earlier it was shown by Nikiforov~\cite{Ni} using
Szemer\'{e}di's Regularity lemma that
$|\lambda-\mu|=o(v)$, hence strongly regular graphs of
degree $> \varepsilon v$ are pseudo-random for any
fixed $\varepsilon > 0$. This was a useful hint.

Next we establish half of Theorem~\ref{th:03}.

\begin{lem}\label{lem:15}
  Let $G$ be a primitive strongly regular graph. Then
  we have
 $$
\frac{k}{|s|} > \frac{v^{1/6}}{2}.
$$
\end{lem}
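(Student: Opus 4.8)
The plan is to bound $|s|$ from above using the basic parameter relations, playing the multiplicity $g$ of $s$ against Seidel's absolute bound $v \le g(g+3)/2$, which gives $g > \sqrt{v} - 2$, so essentially $g^2 \gtrsim v$. First I would note that, since $G$ is primitive, both $G$ and its complement are connected, and for a connected strongly regular graph Proposition~\ref{pr:11}~(i) gives $g s^2 \le k(v-k) < kv$, hence $s^2 < kv/g$. Combined with $g > \sqrt v - 2$ this already yields $s^2 < kv/(\sqrt v - 2) \approx k\sqrt v$, i.e. $|s| \lesssim v^{1/4}\sqrt k$ — the same quality of bound as in Lemma~\ref{lem:13}. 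To reach the stronger conclusion $k/|s| > v^{1/6}/2$ I need to also exploit that $|s|$ is small when $k$ is small, and that $g$ is large when $k$ is large; the idea is that the two regimes cannot fail simultaneously.

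The key extra input is the relation $rs = \mu - k$ from Proposition~\ref{pr:11}~(iii), which gives $|s| \le |rs| = k - \mu \le k$ (using $r \ge 1$ for a primitive graph, so that $|s| \le |r||s|$), and more usefully $|s| = (k-\mu)/r \le k/r$. So if $r$ is large, $|s|$ is automatically small relative to $k$. On the other hand, if $r$ is small, then by symmetry Seidel's bound applied to $f$ (the multiplicity of $r$) together with $f r^2 \le k(v-k)$ controls things, but what I actually want is: the product $r \cdot |s| = k - \mu < k$ together with $r + |s| = \mu - \lambda + 2|s|$... — more cleanly, I would use $f r^2 + g s^2 = k(v-k)$ together with $v - 1 \le \min(f,g)^2$. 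Suppose $k/|s| \le v^{1/6}/2$, i.e. $|s| \ge 2k/v^{1/6}$. Then $g s^2 \ge 4gk^2/v^{1/3}$, and since $g s^2 \le k(v-k) < kv$ we get $g < v^{4/3}/(4k)$. But Seidel gives $g^2 \ge v - 1$, so $v - 1 \le g^2 < v^{8/3}/(16k^2)$, whence $k^2 < v^{8/3}/(16(v-1)) < v^{5/3}/8$ for large $v$, i.e. $k < v^{5/6}/2$ (roughly). Now I bring in $|s| \ge 2k/v^{1/6}$ once more together with the integrality/size of $r$: since $r|s| = k - \mu \le k$, we have $r \le k/|s| \le v^{1/6}/2$. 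Then $f r^2 \le k(v-k) < kv$, but here I instead use that $r$ and $s$ are the roots of $x^2 - (\lambda-\mu)x - (k-\mu) = 0$, so $|s| \le r + |s| \le \ldots$ — the cleanest route is $k - \mu = r|s| \ge |s|$ (as $r \ge 1$) combined with $k > \mu$, giving no contradiction yet, so I must instead compare $k$ against $|s|^2$: from $g s^2 < kv$ and $g^2 \ge v-1$ we get $|s| < (kv)^{1/2}/g^{1/2} \le (kv)^{1/2}/(v-1)^{1/4}$, so $|s|^2 < kv/(v-1)^{1/2} < 2k\sqrt v$ for large $v$; hence $|s| < \sqrt{2k}\, v^{1/4}$, and so $k/|s| > k/(\sqrt{2k}\,v^{1/4}) = \sqrt{k}/(\sqrt 2\, v^{1/4})$. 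Since $k < v^{5/6}/2$ under our assumption... this gives $k/|s| > \sqrt{k}/(\sqrt2 v^{1/4})$, which is $> v^{1/6}/2$ precisely when $k > v^{2/3}/2$, contradicting $k < v^{5/6}/2$ only in the wrong direction — so the final balancing must instead use $k > \sqrt{v-1}$ (the remark after Lemma~\ref{lem:13}) to close the small-$k$ case, and the large-$k$ case is closed directly by $k/|s| > \sqrt{k}/(\sqrt 2 v^{1/4}) \ge (v-1)^{1/4}/(\sqrt2 v^{1/4})$, which is too weak — so the genuinely needed inequality is the sharper $gs^2 \le k(v-k)$ with $v-k$ kept, i.e. when $k$ is close to $v$ the factor $v-k$ is tiny.

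So the real structure of the argument is a case split on the size of $k$. When $k \le v^{3/4}$, I use $|s| < \sqrt{2k}\,v^{1/4} \le \sqrt2\, v^{5/8}$ and $k/|s| > \sqrt k/(\sqrt2 v^{1/4}) \ge v^{1/8}/(\sqrt2\, v^{1/4})$ — still too weak, which tells me the bound $g s^2 \le k(v-k)$ must be supplemented by the dual bound $f r^2 \le k(v-k)$ and the relation $r|s| = k-\mu$ to get $|s| \le \sqrt{k(v-k)/g} \le \sqrt{k(v-k)}/(v-1)^{1/4}$ AND $|s| = (k-\mu)/r$. The cleanest finish: combine $|s|^2 g \le k(v-k)$ with $v \le g(g+3)/2$ to get $g \ge \sqrt v - 2$ hence (for large $v$) $g \ge \frac12 \sqrt v$, so $|s|^2 \le 2k(v-k)/\sqrt v$, giving $|s| \le \sqrt{2}\,(k(v-k))^{1/2} v^{-1/4}$; then $k/|s| \ge k^{1/2}(v-k)^{-1/2} v^{1/4}/\sqrt 2$. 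Using AM–GM, $k^{1/2}(v-k)^{-1/2} \ge$ (something small only when $k \ll v$), so when $k \ge v^{2/3}$ we get $k/|s| \ge v^{1/3}\cdot v^{-1/2}\cdot v^{1/4}/\sqrt2 = v^{1/12}/\sqrt2$ — still off by a factor in the exponent, confirming that the published proof must use a better absolute bound in this regime, presumably $v \le g(g+3)/2$ replaced by an improved Neumaier-type bound, OR the Krein conditions. \textbf{The main obstacle}, then, is that the naive combination of Seidel's bound with Proposition~\ref{pr:11}~(i) only yields exponent $1/8$ or so, not $1/6$; closing the gap requires either the Krein inequalities (which bound $|s|$ in terms of $r$ and the other parameters much more tightly, forcing $|s| \le r(r+1)(\mu+1)/2$-type constraints) or Neumaier's $\mu \le s^3(2s+3)$ bound for the non-conference case, together with a careful case analysis separating large $r$ (where $|s| = (k-\mu)/r$ is automatically small) from small $r$ (where Neumaier's bound pins down $\mu$, hence $k$, hence $|s|$). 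I would expect the proof to proceed: (1) handle conference graphs directly from the explicit formula for $s$; (2) for non-conference primitive graphs, split on whether $r \le v^{1/6}$ or not; in the former case use Neumaier's bound $\mu \le s^2(2|s|+3)$ (and $k = \mu - rs \le \mu + v^{1/6}|s|$) to bound $k$ in terms of $|s|$ and conclude; in the latter case use $|s| \le k/r < k/v^{1/6}$ directly.
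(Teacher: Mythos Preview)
Your proposal does not actually reach a proof; it is a sequence of attempts that you yourself recognise as falling short, and your final diagnosis --- that one needs Neumaier's bound or the Krein inequalities to close the gap --- is wrong. The paper proves Lemma~\ref{lem:15} entirely from Proposition~\ref{pr:11}, Seidel's absolute bound (via Lemma~\ref{lem:13}), and nothing deeper; Neumaier is reserved for bounding $r$, not $|s|$.

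The two ideas you are missing are these. First, the relation $r+s=\lambda-\mu$ from Proposition~\ref{pr:11}(iv) gives $m:=|s|=r+\mu-\lambda\le r+\mu$, and this suggests the correct case split: not on the size of $k$ or of $r$ alone, but on whether $r\ge\mu$ or $r<\mu$. If $r\ge\mu$ then $m\le 2r$, and combined with $rm=k-\mu\le k$ you get $m\le\sqrt{2k}$, whence $k/m\ge\sqrt{k/2}\ge (v-1)^{1/4}/\sqrt2$, which is enough. If instead $\mu>r$, then $m<2\mu$, and here the crucial input you never invoke is Proposition~\ref{pr:11}(ii), which gives $\mu=k(k-1-\lambda)/(v-k-1)<k^2/(v-k)$; hence $k/m>(v-k)/(2k)$.

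Second, the way to reach the exponent $1/6$ is a multiplication trick, not a sharper absolute bound. You already have from Lemma~\ref{lem:13} that $(k/m)^4>k^2/v$. Multiplying this by the square of $k/m>(v-k)/(2k)$ yields $(k/m)^6>(v-k)^2/(4v)$, and when $k\le v/4$ this is at least $v/2^6$. (When $k>v/4$, Lemma~\ref{lem:13} alone already gives $k/m>v^{1/4}/2$.) Your repeated attempts to squeeze a single inequality of the form $|s|^2 g\le k(v-k)$ together with $g\gtrsim\sqrt v$ can only ever recover the exponent $1/4$ in $k/\sqrt v$, i.e.\ Lemma~\ref{lem:13} itself; the extra power comes precisely from combining two \emph{different} bounds on $k/m$ of complementary strength.
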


\begin{proof}
  Denote $|s|=-s$ by $m$.  If $k > v/4$ then $k/m >
  v^{1/4}/2$ follows from Lemma~\ref{lem:13}.

  By Proposition~\ref{pr:11}~(iv) we have
  $m=-s=r+\mu-\lambda \leq r+\mu$.

  If $r \geq\mu$ then this implies $m \leq 2r$. By
  Proposition~\ref{pr:11}~(iii) we have $rm=-rs=$ $=k-\mu
  \leq k$. It follows that $m \leq \sqrt{2k}$. Using
  $k^2+1 \geq v$ we obtain that $k/m > \sqrt{k/2} \geq
  (v-1)^{1/4}/\sqrt{2} \geq v^{1/6}/2$.

  Assume finally that $4k \leq v$ and $\mu > r$.  Then
  using Proposition~\ref{pr:11}~(ii) we see that $m
  \leq r+\mu < 2\mu= 2k(k-1-\lambda)/(v-k-1) <
  2k^2/(v-k)$ and hence $k/m > (v-k)/2k$.  On the other
  hand Lemma~\ref{lem:13} implies that $m^4 < vk^2$ and
  hence $(k/m)^4 > k^2/v$.  Multiplying we obtain that
  $(k/m)^6 > (v-k)^2/4v$.  Using $v \geq 4k$ we see
  that $(k/m)^6 > v/2^6$ as required.
\end{proof}

\section{Exceptional vs non-exceptional
  graphs}\label{sec:two}

We now discuss the three classes of connected
non-exceptional graphs (see~\cite{Go2} for more
details)

\begin{itemize}
\item[(i)] Complete multipartite graphs of type
  $K_{m,m,\dots,m}$ with $n$ parts of size $m$.  The
  eigenvalues of these graphs are $m(n-1),0,-m$,

\item[(ii)] Latin square graphs: given $m-2$ mutually
  orthogonal Latin squares of order $n$, the vertices
  are the $n^2$ cells, two vertices are adjacent if
  they lie in the same row or column or have the same
  entry in one of the squares. We allow $m=2$ (in which
  case the graph is isomorphic to the line graph of
  $K_{m,m}$).  The eigenvalues of these graphs are
  $m(n-1),n-m,-m$,

\item[(iii)] Steiner graphs:the vertices are the blocks
  of a Steiner system $S(2,m,n)$ (that is, they are
  $m$-subsets of an $n$-set with the property that any
  two elements of the set lie in a unique block): two
  vertices are adjacent if and only if the
  corresponding blocks intersect. We allow  $m=2$. The
  eigenvalues of these graphs are $m((n-1)/(m-1)-1),
  (n-1/m-1)-m-1,-m$.
\end{itemize}

It is an easy exercise to show that graphs in the first
two classes are Hamiltonian. For the third class
see~\cite{HR}. It is clear from the above that for
certain types of strongly regular graphs such as line
graphs of Steiner triple systems $k/\Lambda$ can be
quite small and hence these graphs are far from
pseudo-random.

To complete the proof of Theorem~\ref{th:03} we will
need a well-known deep result of Neumaier~\cite{Ne}.

\begin{thm}[Neumaier]\label{th:21}
  Let $G$ be an exceptional strongly regular graph with
  s an integer. Then we have $r \leq
  s(s+1)(\mu+1)/2-1$.
\end{thm}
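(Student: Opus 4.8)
The plan is to prove the contrapositive in a sharp form: I will show that either the asserted bound $r\leq s(s+1)(\mu+1)/2-1$ holds, or $G$ is a Latin square graph or a Steiner graph. Since $G$ is exceptional by hypothesis, the bound must then hold. Write $m=-s$, so $-m\geq 2$ is the smallest eigenvalue of $G$; since $s(s+1)/2=\binom{m}{2}$, the target reads $r+1\leq\binom{m}{2}(\mu+1)$. The basic tool is the classical Delsarte--Hoffman clique bound: every clique of $G$ has at most $1+k/m$ vertices. Assume toward a contradiction that $r+1>\binom{m}{2}(\mu+1)$. By Proposition~\ref{pr:11}(iv), $\lambda-\mu=r+s=r-m$, so this assumption makes $\lambda$ large relative to $m$ and $\mu$.

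Following Neumaier, I would introduce \emph{grand cliques}: maximal cliques with more than $T$ vertices, where $T$ is a threshold of order $m^2\mu$, to be calibrated. The aim is to choose $T$ so that two things hold. (a) Two distinct grand cliques meet in at most one vertex: if $C$ and $D$ shared an edge, maximality would yield a non-adjacent pair $z\in C\setminus D$, $w\in D\setminus C$, whose $\mu$ common neighbours contain all of $C\cap D$; combining this with the clique bound $1+k/m$ and a further count of the relevant $\mu$-graphs forces $|C\cap D|=1$, provided $T$ is large enough. (b) If some edge lies in no grand clique, then $r$ is already small enough: here one uses the standard local fact that, in a strongly regular graph with smallest eigenvalue $-m$, every edge lies in a clique of size at least roughly $\lambda-(m-1)\mu$; an edge in no clique of size exceeding $T$ then forces $\lambda$, hence $r=\lambda-\mu+m$, to satisfy $r+1\leq\binom{m}{2}(\mu+1)$, and this needs $T$ not too large. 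The crucial point is that the two constraints on $T$ are compatible. Under the standing assumption, (b) cannot occur, so every edge lies in a grand clique and, by (a), in exactly one. Thus the grand cliques equip $G$ with a \emph{clique geometry}: a family of cliques partitioning the edge set, any two of which meet in at most one vertex.

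By Bose's theorem on partial geometries, a strongly regular graph carrying a clique geometry whose lines are large relative to its eigenvalues is the point graph of a partial geometry $pg(s',t',\alpha)$. Its smallest eigenvalue equals $-(t'+1)$, so every point lies on exactly $m$ lines, and each line $L$ is a Delsarte clique of size $1+k/m$ with $|L|>T$. For a partial geometry with $m$ lines per point and lines this large, Bose's classification leaves only two possibilities: $\alpha=m-1$, in which case the geometry is a net and $G$ is a Latin square graph, or $\alpha=m$, in which case it is the dual of a Steiner system and $G$ is a Steiner graph. (Every other value of $\alpha$, including $\alpha=1$, which gives a generalized quadrangle, forces the line size to be bounded in terms of $m$, contradicting $|L|>T$; the degenerate case $m=2$ can alternatively be settled via Seidel's classification of strongly regular graphs with smallest eigenvalue $-2$.) Either way $G$ is not exceptional, contrary to hypothesis. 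Hence $r+1\leq\binom{m}{2}(\mu+1)$, i.e.\ $r\leq s(s+1)(\mu+1)/2-1$.

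The hard part is the second paragraph: the $\mu$-graph bookkeeping behind step~(a), and above all the simultaneous choice of a threshold $T$ making (a) and (b) compatible, which is exactly what makes the final bound come out as the clean quantity $\binom{m}{2}(\mu+1)$ with no auxiliary hypothesis that $\mu$ or $v$ be large. The appeal to Bose's theorem, though substantial, can be treated as a cited black box, and verifying that $|L|>T$ is large enough to discard all the bounded partial geometries -- together with the separate treatment of $m=2$ -- is routine.
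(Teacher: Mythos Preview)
The paper does not give its own proof of Theorem~\ref{th:21}; it is introduced as ``a well-known deep result of Neumaier~\cite{Ne}'' and is used purely as a black box in the proof of Lemma~\ref{lem:22}. So there is nothing in the paper to compare your attempt against.

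For what it is worth, your plan does follow the outline of Neumaier's original argument in~\cite{Ne}: prove the contrapositive by introducing grand cliques, calibrate a threshold $T$ so that (a) two grand cliques meet in at most one vertex and (b) every edge lies in some grand clique, and then recognise the resulting clique geometry as a net or the dual of a $2$-design. Two places where your sketch is a little loose. First, the ``standard local fact'' behind your step~(b) is really Neumaier's \emph{claw} analysis: one bounds the size of an induced star $K_{1,c}$ at a vertex (via counting common neighbours against $\mu$), and this is what produces a clique of the required size through each edge; it is not an off-the-shelf lemma. Second, the final identification is not quite ``Bose's classification'' of partial geometries; rather, one checks by direct parameter and integrality arguments that a partial geometry with $m=t'+1$ lines per point and line size exceeding $T$ must have $\alpha\in\{m-1,m\}$ (the cases $\alpha\le m-2$, including generalised quadrangles, force bounded line size). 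These are precisely the steps you flag as ``the hard part'', so as a plan your proposal is sound, but in the setting of this paper one would simply cite~\cite{Ne} rather than reprove it.
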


\begin{lem}\label{lem:22}
  Let $G$ be an exceptional strongly regular graph.
  Then we have\\ $k/r > v^{1/10}$.
\end{lem}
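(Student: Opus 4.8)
The plan is to bound $k/r$ from below by combining Neumaier's bound (Theorem~\ref{th:21}) with the general eigenvalue estimates of Section~1, splitting into cases according to the size of $m=|s|$ relative to a small power of $v$. The key point is that Neumaier's inequality $r \leq s(s+1)(\mu+1)/2 - 1 < m^2(\mu+1)/2$ is useful precisely when $m$ is small, while Lemma~\ref{lem:15} (which gives $k/m > v^{1/6}/2$) already forces $r$ to be small compared to $k$ whenever $m$ itself is comparable to $r$. So the first step is to express everything in terms of $k$, $m$, $\mu$ and $v$ using Proposition~\ref{pr:11}: from (iii) we have $rm = k - \mu \leq k$, so $r \leq k/m$, and from (ii) we have $\mu = k(k-1-\lambda)/(v-k-1) \leq k^2/(v-k)$.

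Next I would dispose of the easy regime. If $k > v^{2/5}$ (say), then since $r \leq k/m \leq k$ and also $r^4 < vk^2$ by Lemma~\ref{lem:13}, one gets $k/r > (k^2/v)^{1/4} > v^{1/10}$ directly — wait, that needs $k^2/v > v^{2/5}$, i.e. $k > v^{7/10}$, so more care is needed. The cleaner split is: combine $r \leq k/m$ with $m > v^{1/6}/(2k/... )$ — actually use Lemma~\ref{lem:15} in the form $m < 2k/v^{1/6}$ is the wrong direction. Let me instead use: from $rm \le k$ we get $k/r \ge m$, so if $m \ge v^{1/10}$ we are immediately done. Hence the only remaining case is $m < v^{1/10}$, and here Neumaier's theorem takes over.

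So the main case is $m < v^{1/10}$. Then Neumaier gives $r < m^2(\mu+1)/2 \le m^2(k^2/(v-k) + 1)/2 < m^2 k^2/(v-k)$ (for $v$ large), hence
$$
\frac{k}{r} > \frac{v-k}{m^2 k} > \frac{v-k}{v^{1/5} k}.
$$
If additionally $k \le v/2$ this gives $k/r > v^{4/5}/(2k) \ge v^{4/5}/v = v^{-1/5}$ — too weak when $k$ is large. So I need to also use $r \le k/m$ again: multiplying the two bounds $k/r > (v-k)/(v^{1/5}k)$ and $k/r \ge m$ — no, $m$ could be tiny. The right move: we have $k/r \ge m$ AND $k/r > (v-k)/(m^2 k)$; multiply to get $(k/r)^3 \ge (v-k)/k \ge 1$ when $k \le v/2$, still weak. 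Multiply the first to the power $2$ with the second: $(k/r)^3 > m^2 \cdot (v-k)/(m^2 k) = (v-k)/k \ge 1$. The issue is genuinely that when $k$ is close to $v/2$ these bounds degrade, which is exactly where the main obstacle lies: Steiner and Latin-square graphs are being excluded precisely to kill the regime where $k/\Lambda$ is small, and an exceptional graph with $k \approx v/2$ and $m$ small is essentially a conference graph or close to one, needing the conference-graph case of Proposition~\ref{pr:11}(v) and Lemma~\ref{lem:13} separately. I would handle $k > v^{3/5}$ via Lemma~\ref{lem:13} together with $r \le k/m$, $m \ge 1$, giving $r^4 < vk^2$ hence $k/r > (k^2/v)^{1/4} > (v^{6/5}/v)^{1/4} = v^{1/20}$ — still short of $v^{1/10}$, so I'd push the threshold to $k > v^{3/5}$ is not enough; one needs $k > v^{7/10}$ for $r^4 < vk^2$ to give $v^{1/10}$. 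The honest structure is therefore a three-way split with the delicate middle range $v^{1/10} \lesssim m$, handled by $k/r \ge m$, versus $m$ small handled by Neumaier plus the bound on $\mu$, and I expect the bookkeeping to reconcile these thresholds cleanly; the genuine obstacle is verifying that no exceptional graph slips through the seam near $k \sim v/2$, $\mu \sim k$, which I'd rule out using $\mu = k(k-1-\lambda)/(v-k-1)$ forcing $\lambda$ small and then re-deriving $r$ small from Proposition~\ref{pr:11}(iv).
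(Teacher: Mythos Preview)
Your overall strategy matches the paper's: dispose of $k > v^{7/10}$ via Lemma~\ref{lem:13}, dispose of $m \geq v^{1/10}$ via $k/r \geq m$ (from $rm = k-\mu \leq k$), and attack the remainder with Neumaier together with $\mu < k^2/(v-k)$. The reason your final case collapses is not a genuine obstacle but a bookkeeping slip: having already removed $k > v^{7/10}$, you then forget this and work only with the much weaker assumption $k \leq v/2$. With the correct hypothesis your own inequality $k/r > (v-k)/(m^2 k)$ gives, under $m < v^{1/10}$ and $k \leq v^{7/10}$, the bound $(v-k)/(m^2k) > (v - v^{7/10})/v^{9/10}$, which is already of order $v^{1/10}$. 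There is no ``seam near $k \sim v/2$'': that entire regime sits inside the case $k > v^{7/10}$ which Lemma~\ref{lem:13} handles.

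Two further points. First, you never treat the conference-graph case, where $s$ is not an integer and Theorem~\ref{th:21} does not apply; this needs a one-line direct computation ($k/r = \sqrt v + 1$). Second, the paper sharpens the endgame by using Neumaier to bound $k$ rather than $r$: writing $k = rm + \mu \leq m^2(m-1)(\mu+1)/2 - m + \mu$, it splits on $m \gtrless \mu$. If $m \geq \mu$ this yields $\sqrt{v-1} \leq k < m^4/2$, forcing $m > v^{1/8}$, a contradiction. If $m < \mu$ it yields $k \leq (m^3+2)\mu/2 < (m^3+2)k^2/(2(v-k))$, i.e.\ $2(v-k) \leq (m^3+2)k$, which under $m < v^{1/10}$, $k \leq v^{7/10}$ (so $m \geq 2$, $v > 2^{10}$, $v > 8k$) gives $7v/4 < 5v/4$. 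Thus the residual case is empty, and no constant is lost.
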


\begin{proof}
  If $G$ is a conference graph then our statement
  follows by direct computation. Hence we can assume
  that s is an integer and $s \leq -2$. Set $m=-s$.

  If $k > v^{7/10}$ then our statement follows from
  Lemma~\ref{lem:13}.

  By Proposition~\ref{pr:11}~(iii) we have $k=rm+\mu$.
  Hence $k/r \geq m$ and if $m > v^{1/10}$ our
  statement follows.

  Furthermore if $m \geq\mu$ then using
  Theorem~\ref{th:21} we obtain that $\sqrt{v-1}\leq
  k=rm+\mu \leq \frac{1}{2}(m^2(m-1)(\mu+1))-m+\mu < m^4/2$ and
  therefore $m > v^{1/8}$.

  Assume now that $m < \mu, k \leq v^{7/10}$ and $m <
  v^{1/10}$.  This implies in particular that $v >
  2^{10}$ and $v > 8k$.

  As above we have $k=rm+\mu \leq
  1/2(m^2(m-1)(\mu+1))-m+\mu$ hence $k \leq (m^3)\mu/2
  +\mu =(m^3+2)\mu/2$.

  By Proposition~\ref{pr:11}~(ii) we have $\mu \leq
  k(k-1)/(v-k-1) < k^2/(v-k)$.  This implies $k \leq
  (m^3+2)k^2/2(v-k$) and hence $2(v-k)\leq (m^3+2)k$.
  Using our assumptions we obtain that $2(7v/8) \leq
  ((m^3+2)/m^3)(km^3) < (10/8)v$, a contradiction.
\end{proof}

The above lemma was inspired by a result of
Spielman~\cite[Corollary 9]{Sp}, which says that if $G$
is an exceptional strongly regular graph with $k=o(v)$
then $r=o(k)$. Spielman used his result as a starting
point for an algorithm testing isomorphism of strongly
regular graphs in time $v^{0(v^{1/3\log v})}$

\smallskip

Lemma~\ref{lem:22} completes the proof of
Theorem~\ref{th:03}.

As noted above connected non-exceptional strongly
regular graphs are Hamiltonian. Combining
Theorem~\ref{th:02} and Theorem~\ref{th:03} we obtain
that large enough exceptional strongly regular graphs
are also Hamiltonian. This completes the proof of
Theorem~\ref{th:01}.

In fact one can see that if $G$ is an exceptional
strongly regular graph of degree $k$ with $v$
sufficiently large then deleting roughly $k/2$ edges
from each vertex in an arbitrary way we still obtain a
Hamiltonian graph. This follows immediately from
Theorem~\ref{th:03} and a qualitative strengthening of
Theorem~\ref{th:02} due to Sudakov and Vu~\cite{SV}.

\begin{thm}[Sudakov, Vu]\label{th:23}
  For any fixed $\varepsilon > 0$ and $v$ sufficiently
  large the following holds. Let $G$ be a $k$-regular
  graph on $v$ vertices such that $k/\Lambda > (\log
  n)^2$. If $H$ is a subgraph of $G$ with maximum
  degree $\Delta(H)\leq (1/2 -\varepsilon)k$ then
  $G'=G-H$ contains a Hamiltonian cycle.
\end{thm}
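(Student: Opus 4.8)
The plan is to prove the theorem by Pósa's rotation--extension method, feeding off the spectral gap only through the expander mixing lemma: for a $k$-regular graph $G$ on $v$ vertices with second eigenvalue $\Lambda$ and disjoint sets $S,T$ one has $\bigl|e_G(S,T)-k|S|\,|T|/v\bigr|\le\Lambda\sqrt{|S|\,|T|}$, and here the hypothesis $k/\Lambda>(\log v)^2$ makes the error term $\Lambda$ negligible against $k$. Write $G'=G-H$; since $\Delta(H)\le(1/2-\varepsilon)k$, every vertex of $G'$ has degree at least $(1/2+\varepsilon)k$. I would reduce Hamiltonicity of $G'$ to two structural properties that survive the adversarial deletion: (A) small-set expansion, $|N_{G'}(S)\setminus S|\ge 2|S|$ for every $S$ with $|S|\le\rho v$ (some fixed $\rho>0$, say $\rho\le 1/6$), and (B) a connection property for large sets. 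The guiding principle is that $H$, having bounded degree, removes at most half of the edges leaving any set, so a positive proportion of the pseudo-random edge population always survives.

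First I would establish (A). Fix $S$ with $|S|=s\le\rho v$ and put $T=N_{G'}(S)\setminus S$. The number of $G'$-edges leaving $S$ is at least $\sum_{u\in S}\deg_{G'}(u)-2e_G(S)\ge(1/2+\varepsilon)ks-ks^2/v-\Lambda s$, where I bounded $2e_G(S)$ by mixing. All of these edges land in $T$, so $e_G(S,T)$ is at least this quantity, while mixing also gives $e_G(S,T)\le ks|T|/v+\Lambda\sqrt{s|T|}$. If $|T|<2s$, then dividing both estimates by $ks$ and using $s\le\rho v$ with $\rho\le 1/6$ and $\Lambda/k=o(1)$ yields $(1/3+\varepsilon-o(1))\le(1/3+o(1))$, a contradiction. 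Hence $|N_{G'}(S)\setminus S|\ge 2s$. The expansion is genuinely robust precisely because $H$ destroys at most the fraction $1/2$ of the $\Theta(ks)$ edges leaving $S$, too few to cram the survivors into a set of size below $2s$.

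Next I would prove (B) in the form: any two disjoint sets $A,B$ with $\max(|A|,|B|)>(1/2-\varepsilon)v$ are joined by an edge of $G'$. Say $|B|>(1/2-\varepsilon)v$ and $|A|\le|B|$. Mixing gives $e_G(A,B)\ge k|A|\,|B|/v-\Lambda\sqrt{|A|\,|B|}$, which exceeds $(1/2-\varepsilon)k|A|$ up to the negligible $\Lambda$ term, whereas $H$ can delete at most $\Delta(H)|A|\le(1/2-\varepsilon)k|A|$ of the edges between $A$ and $B$. Thus a $G$-edge between $A$ and $B$ survives in $G'$. This is exactly where the slack $\varepsilon$ in the hypothesis $\Delta(H)\le(1/2-\varepsilon)k$ is indispensable.

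Finally I would run the rotation--extension argument: from a longest path of $G'$, rotating one endpoint and invoking (A) forces the set $E$ of reachable endpoints to have size exceeding $\rho v$, and likewise for the second-endpoint sets obtained by fixing each $a\in E$. The hard part, and the real content of the theorem, is the \emph{closing} step. Property (B) supplies a surviving edge only when one of the two sets has size past $(1/2-\varepsilon)v$, whereas Pósa's lemma caps the directly produced endpoint sets at about $v/3$; worse, since $H$ may sever \emph{every} edge between two sets each of size below $v/2$, no connection between moderately-linear sets can be guaranteed. Bridging this gap is the main obstacle, and it is here that the full strength of the Sudakov--Vu argument is required. I expect the cleanest route is to first build a path covering all but an $o(1)$-fraction of the vertices, absorb the leftover vertices using the strong bound $|N_{G'}(S)|\ge(1/2+\varepsilon)v$ that falls out of the computation in (A), and only then close up through (B) against a near-half endpoint set.
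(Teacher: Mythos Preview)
The paper does not prove this theorem at all: Theorem~\ref{th:23} is quoted from Sudakov and Vu~\cite{SV} and used as a black box, with no argument given in the text. So there is no ``paper's own proof'' to compare against; you are attempting to reprove an external result that the paper merely cites.

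As for the sketch itself, you have correctly isolated the genuine difficulty and then not resolved it. Properties (A) and (B) are fine and follow from the expander mixing lemma plus the degree bound on $H$, essentially as you wrote. But your own diagnosis is accurate: P\'osa rotation from a longest path produces endpoint sets of size at most roughly $v/3$, while (B) only fires once one side exceeds $(1/2-\varepsilon)v$, and an adversarial $H$ of maximum degree $(1/2-\varepsilon)k$ can indeed kill every edge between two sets each of size below $v/2$. The final paragraph waves at an absorption strategy (``build a near-spanning path, absorb the leftovers, then close via (B)'') but supplies no mechanism for the absorption or for pushing an endpoint set past the $(1/2-\varepsilon)v$ threshold. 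That is precisely the content of the Sudakov--Vu paper, and your write-up effectively concedes this by saying ``the full strength of the Sudakov--Vu argument is required.'' So what you have is a correct reduction to the hard step, not a proof.
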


One can also see that if $G$ is an exceptional strongly regular graph
then the number of Hamiltonian cycles in $G$ is very large .

%One can also see that if $G$ is an exceptional strongly regular graph
%then the number of Hamiltonian cycles in $G$ is $v {!} (\frac{k}{v})^v %\left(1+0(1)\right)^v$.

This follows from Theorem~\ref{th:03} and the following result of Krivelevich.~\cite{Kr}

\begin{thm}[Krivelevich]\label{th:02.4}
  Let $G$ be a $k$-regular graph on $v$ vertices such that
  \begin{itemize}
  \item[(i)] $\frac{k}{\lambda}\geq (\log v)^{1+\varepsilon}$ for some $\varepsilon>0$
  \item[(ii)] $\log k \cdot\log\frac{k}{\lambda}\gg\log v$
  \end{itemize}
  Then the number of Hamiltonian cycles in $G$ is $v {!} (\frac{k}{v})^v \left(1+o(1)\right)^v$.
\end{thm}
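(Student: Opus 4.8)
The plan is to pin the count down from both sides and to check that the resulting upper and lower bounds agree to within the claimed $(1+o(1))^v$ factor. Write $A$ for the adjacency matrix of $G$ and let $\lambda$ denote, as in the statement, the second eigenvalue (the largest of $|\lambda_2|,\dots,|\lambda_v|$). The basic observation is that the number of Hamiltonian cycles $H(G)$ equals half the number of permutations $\sigma$ of the vertex set that are single $v$-cycles and satisfy $A_{i\sigma(i)}=1$ for all $i$ (the two orientations of each cycle). Dropping the single-cycle requirement, the number of \emph{all} such $\sigma$ is exactly $\mathrm{per}(A)$, so $H(G)\le\tfrac12\,\mathrm{per}(A)$, and the whole problem reduces to (a) determining $\mathrm{per}(A)$ and (b) showing that the single-cycle covers account for all of $\mathrm{per}(A)$ up to a factor $(1+o(1))^v$ (a factor that may of course be below $1$).

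First I would establish the upper bound. The Br\'egman--Minc inequality applied to the $k$-regular $0/1$ matrix $A$ gives $\mathrm{per}(A)\le(k!)^{v/k}$, and Stirling's formula yields $(k!)^{1/k}=(1+o(1))\,k/e$ once $k\to\infty$, which conditions (i)--(ii) force. Hence $\mathrm{per}(A)\le(1+o(1))^v(k/e)^v$, and since $v!=(1+o(1))^v(v/e)^v$ we may rewrite $(k/e)^v=(1+o(1))^v\,v!\,(k/v)^v$. This already gives $H(G)\le v!\,(k/v)^v\,(1+o(1))^v$.

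For the lower bound I would first note that $A/k$ is doubly stochastic, so the van der Waerden lower bound on permanents (the Egorychev--Falikman theorem) gives $\mathrm{per}(A)\ge k^v\,v!/v^v=v!\,(k/v)^v$. Together with the upper bound this pins the permanent down to $\mathrm{per}(A)=v!\,(k/v)^v\,(1+o(1))^v$. It therefore remains to prove that the number $N_1$ of single-cycle covers satisfies $N_1\ge\mathrm{per}(A)\,(1+o(1))^v$, after which $H(G)=N_1/2$ gives the matching lower bound. In the complete graph the single-cycle fraction is exactly $(v-1)!/v!=1/v=(1+o(1))^v$, so the content of this step is that the pseudo-randomness of $G$ does not distort the random-permutation cycle statistics by more than a $(1+o(1))^v$ factor.

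The heart of the matter, and the step I expect to be the main obstacle, is controlling cycle covers with two or more cycles. Here I would use the spectral gap directly: the number of short cycles, and more generally the number of covers containing a prescribed short cycle, is governed by the closed-walk sums $\sum_i\lambda_i^\ell$, which condition (i) keeps close to the ``random'' value $k^\ell$. Concretely, a multi-cycle cover has a shortest cycle $C$ of some length $\ell$; the number of covers through $C$ is $\mathrm{per}$ of the principal submatrix of $A$ on $V(G)\setminus V(C)$, and I would estimate this submatrix permanent against $\mathrm{per}(A)$ via eigenvalue interlacing and the pseudo-random edge distribution, then sum over the spectrally controlled family of short cycles. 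One may equivalently package this through the generating function $\sum_\sigma x^{c(\sigma)}\prod_i A_{i\sigma(i)}$ (with $c(\sigma)$ the number of cycles) and show its weight concentrates on $c(\sigma)=1$. Condition (ii), $\log k\cdot\log(k/\lambda)\gg\log v$, is exactly what forces $k$ to be large enough for these short-cycle and interlacing error terms to remain subexponential with base $1+o(1)$, so that the single-cycle covers keep their $1/v=(1+o(1))^v$ share. The delicate regime is that of many simultaneous short cycles, where the pseudo-random estimates are weakest and where condition (ii) must be invoked to absorb the accumulated error.
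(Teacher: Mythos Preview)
The paper does not prove this theorem: it is quoted as a black-box result of Krivelevich~\cite{Kr} and is used only to deduce that exceptional strongly regular graphs have many Hamiltonian cycles. There is therefore no proof in the paper to compare your proposal against.

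That said, your outline does follow the architecture of Krivelevich's own argument: the upper bound via Br\'egman--Minc on $\mathrm{per}(A)$ and the lower bound on $\mathrm{per}(A)$ via Egorychev--Falikman are exactly the two ends of his proof, and both of your computations there are correct. The substantive gap in your proposal is the passage from ``many cycle covers'' to ``many Hamiltonian cycles''. You acknowledge this is the main obstacle, but the mechanism you sketch (bounding, for each short cycle $C$, the permanent of the complementary principal submatrix by interlacing, then summing over $C$) does not by itself yield the needed $(1+o(1))^v$ control: interlacing gives essentially nothing useful about permanents of submatrices, and the union over all short cycles is far too crude to stay within a $(1+o(1))^v$ factor. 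Krivelevich's actual lower-bound argument is quite different in flavour: rather than subtracting off multi-cycle covers, he shows constructively that a typical $2$-factor can be transformed into a Hamiltonian cycle by a controlled sequence of rotation/extension steps, using the spectral gap to guarantee the required expansion at each stage; condition~(ii) is what makes the number of steps, and hence the multiplicative loss, subexponential. So your framework is right, but the hard half of the proof would need to be replaced by a genuinely different argument.
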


The \textit{toughness\/} of a finite connected graph
$G$ with vertex set $V$ is defined as the minimum of
the quotient $|X|/ c(V\backslash X)$ over all subsets
$X$ of $V$ such that $c(V\backslash X) > 1$ where
$c(Y)$ denotes the number of connected components of
the graph induced on~$Y$ by $G$.

Improving an earlier result of Alon, Brouwer~\cite{Br}
proved that if $G$ is a connected noncomplete
$k-$regular graph then its toughness $t$ satisfies $t >
k/\Lambda -2$. Hence Theorem~\ref{th:03} implies that
exceptional strongly regular graphs are very tough.

Chvatal conjectured long ago that $t$-tough graphs are
Hamiltonian if $t$ is a large enough constant.
Motivated by the above discussion we suggest the
following weaker problem: {\sl Prove that $\log v$ tough
graphs are Hamiltonian if the number of vertices $v$ is
sufficiently large!}

As we saw above the pseudorandomness of exceptional
strongly regular graphs implies that these graphs have
some interesting graph-theoretic properties, for many
more see~\cite{KS2}. It would be interesting to see
whether connected non-exceptional strongly regular
graphs also posses similar properties e.g. whether an appropriate analogue of the conclusion of Theorem~\ref{th:23} and Theorem~\ref{th:02.4} holds for them.

\section{More pseudo-random graphs}\label{sec:three}

In the last section we will show that in two other
natural classes of $k$-regular graphs $G\quad
k/\Lambda(G)$ is often relatively large.

A graph is called \textit{distance regular\/} if, given
any two vertices $g$ and $h$, the number of vertices in
$G$ at distance $i$ from $g$ and distance $j$ from $h$
only depends on $i$, $j$ and the distance between $g$
and $h$. As noted before a connected strongly regular graph
is a distance regular graph of diameter two.

For any graph $G$ with vertex set $V(G)$ let $G_r$
denote the $r$-th distance graph of $G$ i.e. the graph
on the same vertex set in which two vertices $u$ and
$v$ in $V(G)$ are connected if their distance in $G$ is
exactly $r$. We call $M$ a \textit{merged\/} graph of $G$
if it is the union of some of the $G_r$.

Denote the adjacency matrix of $G_r$ by $A_r$. It is
well-known see (e.g.~\cite{Go2}) that if $G$ is a
distance regular graph then $A_r$ is a polynomial of
degree $r$ in $A=A_1$, say $A_r=p_r(A)$. It follows
that if $\theta$ is an eigenvalue of $A$ with
eigenspace $V_\theta$, then each vector $v$ in
$V_\theta$ is an eigenvector of $A_r$ with eigenvalue
$p_r(\theta)$.  This implies that if $M$ is a merged
graph of $G$ and $\lambda$ is an eigenvalue of $M$ with
eigenspace $V_\lambda$, then $V_\lambda$ is the direct
sum of some eigenspaces of $G$.

We will use the following theorem of Godsi~\cite{Go1}.

\begin{thm}[Godsil]\label{th:31}
  Let $G$ be a connected distance regular graph of
  degree $k$\  $(k > 2)$. Suppose that $G$ is not a
  complete multipartite graph and that $\theta$ is an
  eigenvalue of $G$ with multiplicity $m$. Then if
  $\theta \neq \pm k$, the diameter of $G$ is at most
  $3m-4$ and $k \leq (m-1)(m-2)/2$.
\end{thm}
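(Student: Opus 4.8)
The plan is to work with the $m$-dimensional representation of $G$ carried by the eigenspace $V_\theta$. Writing $E$ for the orthogonal projection onto $V_\theta$, connectivity of $G$ means the vectors $Ee_v$ $(v\in V(G))$ span $V_\theta$, and distance regularity means $\langle Ee_u,Ee_v\rangle$ depends only on $\mathrm{dist}(u,v)$; after rescaling we obtain unit vectors $\hat v\in\mathbb{R}^m$ with $\langle\hat u,\hat v\rangle=w_{\mathrm{dist}(u,v)}$. Here $w_0=1$, $|w_i|\le 1$ by Cauchy--Schwarz, and, from $AA_i=b_{i-1}A_{i-1}+a_iA_i+c_{i+1}A_{i+1}$ (the intersection-number form of the polynomial relations $A_r=p_r(A)$ discussed above), the \emph{cosine sequence} $(w_i)_{i=0}^d$ obeys the three-term recurrence $\theta w_i=b_{i-1}w_{i-1}+a_iw_i+c_{i+1}w_{i+1}$ with positive outer coefficients. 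In particular $w_1=\theta/k$, so the hypothesis $\theta\neq\pm k$ is precisely what keeps the representation nondegenerate: it rules out the constant map $(\theta=k)$ and the sign map forced in the bipartite case $(\theta=-k)$, and it gives $w_1\in(-1,1)$.

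First I would prove the degree bound. Fix a vertex $u$ and consider the Gram matrix of the $k$ vectors $\{\hat x:x\sim u\}$; two neighbours of $u$ lie at distance $1$ or $2$, so this matrix equals $w_2J+(1-w_2)I+(w_1-w_2)B$, where $B$ is the adjacency matrix of the local graph $\Delta(u)$, a $\lambda$-regular graph on $k$ vertices. These vectors all lie in the $m$-dimensional space $V_\theta$, so the Gram matrix has rank at most $m$; since $w_1\neq w_2$ (the contrary would force $m\ge k-1$, in which case the desired bound is immediate), this forces $B$ to have a single eigenvalue of multiplicity at least $k-m-1$. A $\lambda$-regular graph with an eigenvalue of such enormous multiplicity is extremely constrained: unless every local graph $\Delta(u)$ is a disjoint union of cliques — a configuration which, together with distance regularity, forces $G$ to be a complete multipartite graph and so is excluded by hypothesis — the complement of $\Delta(u)$ is such a union, and then a dimension count on the local representation in the spirit of Seidel's absolute bound (Theorem~\ref{th:12}) bounds both the number of parts and their common size by a linear function of $m$; their product, which is $k$, comes out at most $(m-1)(m-2)/2$.

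For the diameter bound I would track the filtration $d_i=\dim\mathrm{span}\{\hat v:\mathrm{dist}(u,v)\le i\}$, so that $1=d_0\le d_1\le\dots\le d_d=m$, the final equality holding because the $\hat v$ span $V_\theta$. The recurrence shows that a vertex at distance $i+1$ from $u$ already has its image in $\mathrm{span}\{\hat x:\mathrm{dist}(u,x)\le i\}$ as soon as $d_{i+1}=d_i$; hence once the dimension stalls it never grows again, and so it strictly increases until it reaches $m$. The crux is to show the dimension cannot stall for more than two consecutive indices — i.e.\ it gains at least one unit every three steps — after which $d\le 3m-4$ follows by summing the increments and checking the first few steps by hand. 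This last point is the main obstacle: it requires dissecting exactly when the cosine sequence degenerates, which is again tied to near-bipartite or antipodal behaviour and to the exclusion of complete multipartite graphs, and it is also what is needed to extract the sharp constants $3m-4$ and $(m-1)(m-2)/2$ rather than mere $O(m)$ and $O(m^2)$ bounds.
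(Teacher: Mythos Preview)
The paper does not prove Theorem~\ref{th:31}; it is quoted from Godsil~\cite{Go1} and used as a black box in the proof of Corollary~\ref{co:32}. There is therefore no proof in the paper against which to compare your attempt.

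Your sketch does follow the broad strategy of Godsil's original argument, via the spherical representation in the eigenspace $V_\theta$ and the cosine sequence $(w_i)$. However, the degree-bound portion contains a genuine error. You assert that if every local graph $\Delta(u)$ is a disjoint union of cliques then distance regularity forces $G$ to be complete multipartite. This is false: the Hamming graph $H(d,q)$ with $d\ge 2$ is distance-regular, each local graph is a disjoint union of $d$ cliques of size $q-1$, and $H(d,q)$ is certainly not complete multipartite. (In fact the local graph of a complete multipartite graph is again complete multipartite, not a union of cliques, so the implication fails in both directions.) Godsil's route to $k\le (m-1)(m-2)/2$ does not pass through any such dichotomy on $\Delta(u)$; it uses the rank constraint on the Gram matrix together with bounds on systems of lines in $\mathbb{R}^m$ having a prescribed small set of mutual angles.

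For the diameter bound your outline is closer in spirit to the original, but as you yourself concede, the key assertion that the dimension filtration $d_0\le d_1\le\cdots\le d_d$ cannot stall for three consecutive indices is left entirely unproved, and that step is exactly where the substance of Godsil's argument resides.
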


In particular we have $|G| < m^{6m}$

\smallskip

We prove the following.

\begin{cor}\label{co:32}
  Let $G$ be a connected non-bipartite distance-regular
  graph on $v$ vertices which is not a complete
  multipartite graph. Let $M$ be a connected merged
  graph of $G$ of degree $p$ . Then $\Lambda(M) <
  \sqrt{ 6pv \log\log v/\log v}$.
\end{cor}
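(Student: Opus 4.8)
The plan is to control the eigenvalues of $M$ by relating them to the multiplicities of eigenvalues of $G$, and then invoke Godsil's theorem (Theorem~\ref{th:31}) to bound those multiplicities from below in terms of $v$. Recall from the discussion preceding the statement that if $\lambda$ is an eigenvalue of the merged graph $M$ with eigenspace $V_\lambda$, then $V_\lambda$ decomposes as a direct sum of eigenspaces of $G$. Hence if $\lambda$ is an eigenvalue of $M$ other than its trivial eigenvalue $p$ (the degree), then $V_\lambda$ contains some non-trivial eigenspace $V_\theta$ of $G$, i.e.\ some eigenvalue $\theta \neq \pm k$ of $G$ with multiplicity $m$ satisfying $m \leq \dim V_\lambda \leq \mathrm{mult}_M(\lambda)$. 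Actually what I want is the reverse inequality: every eigenvalue $\lambda \neq p$ of $M$ has multiplicity $\mathrm{mult}_M(\lambda) \geq m_\theta$ for some eigenvalue $\theta \neq \pm k$ of $G$ appearing in $V_\lambda$, so $\mathrm{mult}_M(\lambda) \geq \min_{\theta \neq \pm k} m_\theta$.

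Next I would extract a lower bound on this minimal multiplicity from Godsil's theorem. Since $G$ is connected distance regular, non-bipartite (so $-k$ is not an eigenvalue, which is what allows us to say every eigenvalue $\theta$ of $G$ with $\theta \neq k$ satisfies $\theta \neq \pm k$) and not complete multipartite, Theorem~\ref{th:31} applies to every such $\theta$: if $\theta$ has multiplicity $m$ then $v = |G| < m^{6m}$. This forces $m$ to grow with $v$; crudely, taking logarithms twice gives $\log v < 6m\log m$, so $m \gtrsim \log v / \log\log v$. I would turn this into a clean bound of the shape $m \geq c\,\log v/\log\log v$ valid for $v$ large, tracking constants loosely since the final statement only asks for the constant $6$ inside the square root.

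Now I combine this with the standard trace bound for a $p$-regular graph: $\mathrm{Tr}(A_M^2) = pv$, and the trivial eigenvalue $p$ contributes only $p^2$, so $\sum_{\lambda \neq p} \lambda^2\,\mathrm{mult}_M(\lambda) = pv - p^2 < pv$. Every nontrivial eigenvalue $\lambda$ has multiplicity at least $m_0 := \min_{\theta \neq \pm k} m_\theta$, and at least one of them equals $\Lambda(M)$ in absolute value (since $\Lambda(M)$ is realized by some eigenvalue $\neq p$, using that $M$ is connected so $p$ is simple and dominant). Therefore $m_0 \,\Lambda(M)^2 \leq \sum_{\lambda \neq p}\lambda^2\,\mathrm{mult}_M(\lambda) < pv$, giving $\Lambda(M)^2 < pv/m_0 \leq pv\log\log v/(c\log v)$. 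Choosing the constant so that $1/c \leq 6$ (which the crude estimate $m_0 \geq \log v/(6\log\log v)$ already delivers) yields $\Lambda(M) < \sqrt{6pv\log\log v/\log v}$, as required.

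The main obstacle I anticipate is the careful handling of two edge cases. First, one must make sure $\Lambda(M)$ is genuinely attained by a nontrivial eigenvalue and not by $\pm p$ in a way that evades the multiplicity bound — here connectedness of $M$ rules out eigenvalue $p$ having multiplicity $> 1$, and if $-p$ were an eigenvalue $M$ would be bipartite, but one should check whether the statement needs $M$ (not just $G$) non-bipartite, or whether non-bipartiteness of $G$ plus $M$ being a merged graph suffices; I would argue that if $M$ were bipartite then the bipartition would be preserved by $G$, forcing $G$ bipartite, a contradiction. Second, and more delicately, one must verify that Godsil's hypothesis "$\theta \neq \pm k$" is satisfied for every eigenvalue of $G$ other than $k$ itself — this is exactly where non-bipartiteness of $G$ is used, since a connected graph has $-k$ as an eigenvalue iff it is bipartite. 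The inequality $v < m^{6m}$ also needs $k > 2$ in Godsil's theorem, which holds automatically for $v$ large since a connected distance regular graph on many vertices has large degree; alternatively small cases are absorbed into the "for $v$ large" implicit in the asymptotic conclusion.
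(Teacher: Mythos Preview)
Your proposal is correct and follows essentially the same line as the paper: use the eigenspace decomposition to show every eigenvalue $\lambda \neq p$ of $M$ has multiplicity at least $m=\min_{\theta\neq k} m_\theta$, invoke Godsil's bound $v<m^{6m}$ to extract $m\geq \log v/(6\log\log v)$, and finish with the trace identity $t\lambda^2<\Tr(\Adj(M)^2)=pv$. Your separate worry about $M$ possibly being bipartite is unnecessary (and your proposed resolution via ``the bipartition would be preserved by $G$'' is not valid in general): the multiplicity bound $t\geq m$ already applies to $\lambda=-p$ just as to any other $\lambda\neq p$, so $\Lambda(M)^2<pv/m$ follows without any further case analysis.
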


\begin{proof}
  Let $k$ be the degree of $G$ and $m$ the smallest
  multiplicity of an eigenvalue $\theta \neq k$ of $G$.
  By the above discussion the multiplicity $t$ of an
  eigenvalue $\lambda \neq p$ of $M$ is at least as
  large as $m$. By Theorem~\ref{th:31} we have $t > m
  \geq \log v/6\log\log v$.  Similarly to the proof of
  Proposition~\ref{pr:11}~(i) we see that $t\lambda^2 <
  tr(\Adj(M)^2) =pv$, where $\Adj(M)$ is the adjacency
  matrix of $M$. Hence $\lambda^2 < 6pv \log\log v/\log
  v$ and our statement follows.
\end{proof}

In particular if we have a family of merged graphs
$M_n$ as above satisfying $p_n> \varepsilon v_n$ for
some $\varepsilon > 0$, then $\Lambda(M_n)=o(p_n)$.
This extends the main result of~\cite{Ni} to distance
regular graphs.

These results yield some partial evidence supporting Conjecture~\ref{conj:02} from the introduction. By a result of Bang, Dubickas, Koolen and Moulton~\cite{BDKM}
there are only finitely many distance regular graphs of fixed valency
greater than two. Hence in Conjecture~\ref{conj:02} we may assume that the valency is sufficiently large.

The following weaker form of Conjecture~\ref{conj:02} seems to be already very hard

\textbf{Conjecture 3.1}
Connected distance regular graphs of diameter d are Hamiltonian with finitely many exceptions.

 Note that van Dam and Koolen ~\cite{DK} constructed a family of non-vertex transitive distance regular graphs with unbounded diameter.
 However such families seem to be hard to find. It may be a reasonable first step to establish the validity of Conjecture 3.1 for distance transitive graphs.

\smallskip

Finally we will prove Theorem~\ref{th:05} in a somewhat
more general form.

Let $P$ be a permutation group of degree $n$. If $V$ is
an $n$ dimensional vector-space over the complex
numbers then $P$ acts on $V$ in a natural way by
permuting the elements of an orthonormal basis.  Hence
we have a complex representation $\pi$ of the group $P$
by permutation matrices. If the group $G$ is transitive
then it is well-known (see~\cite[Corollary 5.15]{Is})
that the trivial representation $1_P$ occurs with
multiplicity~$1$ in a decomposition of $\pi$ into
irreducible constituents.

The following result is motivated by an argument of
Gowers~\cite{Gow}.

\begin{prop}\label{pr:3.4}
  Let $G$ be a $k$-regular connected graph on $v$
  vertices and $P$ a group of automorphisms which acts
  transitively on $G$. Assume that the minimal degree
  of a non-trivial irreducible constituent of the
  natural representation $\pi$ of $P$ by permutation
  matrices is at least $t$. Then $\Lambda(G) <
  \sqrt{vk/t}$.
\end{prop}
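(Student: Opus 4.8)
The plan is to bound $\Lambda(G)$ by analyzing the adjacency matrix $A$ of $G$ inside the permutation module $V$, exploiting the fact that $A$ commutes with the action of $P$. First I would decompose $V$ as an orthogonal direct sum $V = V_0 \oplus V_1 \oplus \dots$ of isotypic components, where $V_0$ is the one-dimensional space spanned by the all-ones vector $j$ (this is the unique trivial constituent, by the transitivity of $P$ and \cite[Corollary 5.15]{Is}). Since $A$ is a $P$-equivariant symmetric operator, each eigenspace of $A$ is $P$-invariant, and more importantly every eigenvector of $A$ orthogonal to $j$ lies in $V \ominus V_0$, i.e.\ in the span of the non-trivial isotypic components. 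The key point is that, by hypothesis, every non-trivial irreducible constituent has degree at least $t$; since constituents appear inside an isotypic block as multiple copies of the same irreducible, every non-trivial isotypic component has dimension that is a multiple of its irreducible degree, hence is itself at least $t$.

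Next I would pin down the eigenvalue I want to bound. Let $\lambda$ be an eigenvalue of $A$ with $|\lambda| = \Lambda(G)$, and let $W$ be its eigenspace; since $G$ is connected and $k$-regular, $\lambda \neq k$, so $W \perp j$, hence $W \subseteq V \ominus V_0$. Now $W$ is $P$-invariant, so it is a direct sum of subspaces of non-trivial isotypic components; in particular $\dim W \ge t$ (if $W$ meets more than one isotypic block, its dimension is a sum of such contributions, each $\ge t$; if it sits inside a single isotypic block, the multiplicity-one-per-copy structure of irreducibles still forces $\dim W$ to be a positive multiple of the irreducible degree $\ge t$). The rest is the trace argument already used for Proposition~\ref{pr:11}(i): one has
$$
t\,\lambda^2 \le \sum_{i=1}^{v} \lambda_i(G)^2 = \Tr(A^2) = vk,
$$
because $A^2$ has all diagonal entries equal to $k$ (the number of closed walks of length $2$ from a vertex is its degree). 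Rearranging gives $\Lambda(G)^2 = \lambda^2 \le vk/t$, hence $\Lambda(G) < \sqrt{vk/t}$ (the strict inequality since $t < v$, as $V_0$ is a proper nontrivial summand).

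The main obstacle, and the step deserving the most care, is the claim that the eigenspace $W$ has dimension at least $t$: one must be careful to distinguish irreducible constituents, isotypic components, and eigenspaces, and to verify that an eigenspace cannot be a ``small'' piece cut out of a large isotypic block. This is where equivariance of $A$ does the work — because $A$ commutes with $P$, Schur's lemma forces $A$ to act as a scalar on each irreducible copy inside a given isotypic block, so the intersection of any eigenspace of $A$ with an isotypic block of irreducible degree $d$ is a $P$-submodule isomorphic to several copies of that irreducible, hence of dimension a multiple of $d \ge t$ (or $0$). Summing over blocks gives $\dim W \ge t$ whenever $W \ne 0$, which is all we need. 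One should also note for Theorem~\ref{th:05} that primitivity of $X_n$ together with the index of the largest abelian normal subgroup tending to infinity forces, via standard results on primitive groups (e.g.\ bounding the degree of small faithful representations of groups with large non-abelian structure), the minimal non-trivial constituent degree $t_n \to \infty$; combined with $k_n > \epsilon v_n$ this yields $\Lambda(G_n) < \sqrt{v_n k_n/t_n} = o(k_n)$, since $\sqrt{v_n k_n/t_n} \le \sqrt{k_n^2/(\epsilon t_n)} = k_n/\sqrt{\epsilon t_n} = o(k_n)$.
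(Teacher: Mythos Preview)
Your proof is correct and follows essentially the same line as the paper's: use that $\Adj(G)$ commutes with $\pi(P)$ so eigenspaces are $P$-invariant, observe that the trivial constituent is exhausted by the $k$-eigenspace so every other eigenspace has dimension $\ge t$, and then apply the trace bound $t\lambda^2 \le \Tr(\Adj(G)^2)=vk$. Your Schur's-lemma discussion is in fact more careful than the paper's one-line version; the only small quibble is that the strict inequality really comes from the positive contribution $k^2$ of the principal eigenvalue to $\Tr(\Adj(G)^2)$, not merely from $t<v$.
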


\begin{proof}
  It is well-known that the permutation matrices
  $\pi(p)$ ($p\in P$) commute with the $\Adj(G)$. This
  implies that the eigenspaces of $\Adj(G)$ are
  invariant under $\pi(P)$ (see~\cite[1.5]{Ba}). Since
  $G$ is connected the eigenspace corresponding to $k$
  is the one-dimensional subspace $V_k=\{<x,x\dots
  x>\}$.  $V_k$ affords the trivial representation
  $1_P$. By our condition the dimensions of all other
  eigenspaces are larger than $t$. It follows that the
  multiplicity of an eigenvalue $\lambda$ different
  from $k$ is at least $t$. As above we have
  $t\lambda^2 < \Tr(\Adj(G)^2)= vk$. Our statement
  follows.
\end{proof}

Recall that a permutation group $P$ acting on a set
$\Omega$ is \textit{primitive\/} if there is no
non-trivial $P$ invariant partition of $\Omega$ or
equivalently if all the graphs with vertex set $\Omega$
invariant under $P$ are connected~\cite{Cam}.

\begin{cor}\label{co:3.5}
  Let $G$ be a $k$-regular graph on $v$ vertices such
  that its automorphism group $A$ is a primitive
  permutation group. If $S$ is the largest abelian
  normal subgroup of $A$ then $\Lambda(G) < c_1
  (vk/\sqrt{\log(A/S)} )^{1/2}$ for some constant $c_1
  > 0$.
\end{cor}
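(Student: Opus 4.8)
The plan is to invoke Proposition~\ref{pr:3.4}: it suffices to prove that the minimal degree $t$ of a non-trivial irreducible constituent of the permutation representation $\pi$ of $A=\Aut(G)$ on $V(G)$ satisfies $t\ge c_2\sqrt{\log|A:S|}$ for some absolute constant $c_2>0$. Indeed, Proposition~\ref{pr:3.4} then gives $\Lambda(G)<\sqrt{vk/t}\le c_1\bigl(vk/\sqrt{\log|A:S|}\bigr)^{1/2}$ with $c_1=c_2^{-1/2}$. (We may assume $v\ge2$, so that $\pi$ has a non-trivial constituent and $t$ is defined; and $G$ is connected, being a graph invariant under the primitive group $A$, so Proposition~\ref{pr:3.4} applies. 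If $|A:S|=1$ the asserted inequality is vacuous.)

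The first and main step is the elementary observation that \emph{every non-trivial irreducible constituent of the permutation character of a primitive permutation group is faithful}. To prove this, let $\chi$ be such a constituent of $\pi=\mathrm{Ind}_{A_\omega}^{A}1$ and suppose $K=\ker\chi\ne1$. Since $K$ is a non-trivial normal subgroup of the primitive group $A$, it is transitive, so $A=A_\omega K$ and hence the natural homomorphism $A_\omega\to A/K$ is surjective. As $\chi$ is inflated from $A/K$, its restriction $\chi|_{A_\omega}$ is the composite of this surjection with an irreducible representation of $A/K$, hence is irreducible; but Frobenius reciprocity gives $\langle\chi|_{A_\omega},1\rangle=\langle\pi,\chi\rangle\ne0$, so $\chi|_{A_\omega}$ is trivial, and therefore $\chi$ is trivial --- a contradiction. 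Note that this uses only the fact that a non-trivial normal subgroup of a primitive group is transitive; no structural (O'Nan--Scott type) analysis is required.

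Next, let $\chi_0$ be a non-trivial constituent of $\pi$ of minimal degree $t$. By the observation, $\chi_0$ is a faithful irreducible representation of $A$ of degree $t$, so $A$ embeds in $GL_t(\mathbb{C})$. By Jordan's theorem on finite complex linear groups there is a function $J$ with $\log J(t)=O(t^2)$ (for large $t$ one may take $J(t)=(t+1)!$) such that every finite subgroup of $GL_t(\mathbb{C})$ has an abelian normal subgroup of index at most $J(t)$; applying this to $A$ produces an abelian normal subgroup $B\trianglelefteq A$ with $|A:B|\le J(t)$. Since $S$ is the \emph{largest} abelian normal subgroup of $A$, we conclude $|A:S|\le|A:B|\le J(t)$, hence $\log|A:S|\le\log J(t)\le c\,t^{2}$, i.e. $t\ge c_2\sqrt{\log|A:S|}$. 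Feeding this into Proposition~\ref{pr:3.4} as above completes the proof.

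I expect this to be essentially the entire argument; the only ingredient from outside the paper is Jordan's theorem. The step that needs care is the faithfulness observation, together with two realisations: (i) it holds uniformly for every primitive group, so no case analysis is needed, and (ii) the \emph{single} constituent of smallest degree already yields a faithful representation of $A$, so one never has to bound the dimension of the sum of all non-trivial constituents (which would be uncontrollable). The remaining ``obstacle'' is merely cosmetic --- choosing a convenient explicit form of the Jordan bound, any version of shape $J(t)\le\exp(O(t^2))$ being more than sufficient.
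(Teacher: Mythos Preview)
Your argument is correct and follows exactly the paper's route: faithfulness of every non-trivial constituent of the permutation character of a primitive group, then Jordan's theorem to bound $|A:S|$ in terms of $t$, then Proposition~\ref{pr:3.4}. The only difference is that you supply a self-contained proof of the faithfulness claim (via transitivity of non-trivial normal subgroups and Frobenius reciprocity), whereas the paper simply cites it from~\cite{Cam}.
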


\begin{proof}
  It is well-known (see~\cite[Exercise 2.20]{Cam}) that
  if $P$ is a primitive group then a non-trivial
  irreducible constituent of the natural representation
  $\pi$ by permutation matrices is faithful.

  By a classical theorem of Jordan a finite subgroup of
  $GL(n,C)$ has an abelian normal subgroup of index
  $J(n)$, where $J(n) < c^{n^2}$ for some constant $c$
  (see~\cite[Theorem 14.12]{Is}).

  Hence if $t$ is the smallest degree of a non-trivial
  irreducible constituent of $\pi$ then we have $t >
  c_0 \sqrt{\log |A/S|}$. Using Proposition~\ref{pr:3.4} we
  obtain our statement.
\end{proof}

It would be interesting to see whether Corollary~\ref{co:3.5} has some kind of extension to merged graphs of primitive coherent configurations (see~\cite{Ba}).
Theorem~\ref{th:05} is an immediate consequence of
Corollary~\ref{co:3.5}.

We remark that if $A$ is a primitive permutation group
of degree $v$ then it has a unique largest abelian
normal subgroup $S$ which is either trivial or
elementary abelian of order $v$. In the first case we
have $\Lambda(G) < c_1 k \sqrt{v/k\log v}$. In the
second case if $A/S$ is small then a graph $G$
invariant under $A$ is "close" to being a Cayley graph
of an elementary abelian group.

\end{document}